\newtheorem{theorem}{Theorem}[section]
\newtheorem{corollary}[theorem]{Corollary}
\newtheorem{proposition}[theorem]{Proposition}
\theoremstyle{definition}
\newtheorem{remark}[theorem]{Remark}
\renewcommand{\P}{{\mathbb P}}
\newcommand{\F}{{\mathcal F}}
\newcommand{\M}{{\mathcal M}}
\newcommand{\B}{{\bf B}}
\newcommand{\E}{{\mathbb E}}
\newcommand{\TT}{{\mathbb T}}
\newcommand{\C}{{\mathbb C}}
\newcommand{\N}{{\mathbb N}}
\newcommand{\Z}{{\mathbb Z}}
\newcommand{\T}{{\mathbb T}}
\theoremstyle{remark}
\begin{document}

\baselineskip=16pt

\title[Resolvent conditions and growth of powers on $L^p$ spaces]
{Resolvent conditions and growth of powers of operators on $L^p$ spaces}

\author{Christophe Cuny}
\address{UMR CNRS 6205, Laboratoire de Math\'ematiques de Bretagne Atlantique, Univ Brest }
\email{christophe.cuny@univ-brest.fr}

\subjclass[2010]{Primary: 47A35, 42A61}
\keywords{Kreiss resolvent condition, power-boundedness, mean ergodicity, Ces\`aro boundedness, Fourier multipliers, type and cotype, UMD spaces}

\begin{abstract}
Let $T$ be a bounded linear operator on $L^p$. We study the rate of growth of the norms of
the powers of $T$ under resolvent conditions or Ces\`aro boundedness assumptions. Actually the relevant properties of $L^p$ spaces  in our study are 
their type and cotype, and for $1<p<\infty$, the fact that they are UMD.
 Some of the proofs make use of Fourier multipliers on Banach spaces,
 which explains why UMD spaces come into play.
\end{abstract}

\maketitle
\vspace*{-0.6 cm}

\section{Introduction}
\smallskip

We study the rate of growth of $\|T^n\|$ for a bounded operator $T$ on a Banach space $X$ under various conditions, continuing recent works of Berm\'udez, Bonilla, M\"uller and Peris \cite{BBMP}, Bonilla and M\"uller \cite{BM} and Cohen, Cuny, Eisner and Lin \cite{CCEL}. In particular, we extend several results of \cite{CCEL}, obtained when $X=H$ is a Hilbert space, to $L^p$ spaces and more generally
to spaces with non trivial type and/or finite cotype.

\medskip

Let us recall the conditions that are relevant to our study. We refer to \cite{BBMP} and 
\cite{CCEL} for more information as well as historical background concerning those conditions. 

\medskip

Let $T$ be a bounded operator on a Banach space $X$. For simplicity we shall assume 
that $X$ is a \emph{complex} Banach space, while all the meaningfull statements 
(i.e. the statements that do not require a complex Banach space in their formulation) hold true
also for real spaces.

\medskip

We say that $T$ is {\it Kreiss bounded} if there exists $C>0$ such that, with 
$R(\lambda,T):=(\lambda I-T)^{-1}$,
\begin{equation} \label{KRC}
\|R(\lambda,T)\| \le \frac C{|\lambda| -1} \qquad \forall |\lambda| > 1\, .
\end{equation}

\smallskip



We say that $T$ is {\it uniformly Kreiss bounded} if there exists $C>0$ such that
\begin{equation} \label{UKR}
\sup_{n\ge 1} \|\sum_{k=0}^n \frac{T^k}{\lambda^{k+1}}\| \le \frac C{|\lambda|-1} \qquad
\forall |\lambda| > 1. 
\end{equation}

We say that $T$ is {\it absolutely Ces\`aro bounded} if  there exists $C>0$ such that 
\begin{equation} \label{ACB}
\sup_{n\ge 1}\frac1n \sum_{k=0}^{n-1} \|T^k x\| \le C\|x\| \qquad \forall \  x\in X.
\end{equation}

We say that $T$ {\it strongly Ces\`aro bounded} if there is a $C>0$ such that
\begin{equation}\label{SCB}
\sup_{n\ge 1} \frac1n \sum_{k=0}^{n-1} |\langle x^*,T^kx \rangle | \le C\|x^*\|\cdot \|x\|
\quad \forall  (x,x^*) \in X \times  X^*.
\end{equation}

It was proved in \cite{CCEL} that \eqref{SCB} is equivalent to the existence of $C>0$ such that
\begin{equation}\label{SCB2}
\sup_{n\ge 1} \sup_{|\gamma_0|=1,\ldots , |\gamma_{n-1}|=1} 
\frac1n \| \sum_{k=0}^{n-1} \gamma_k T^kx \| \le C \|x\|
\quad \forall x\in X\, .
\end{equation}

Let us mention the following implications concerning those conditions. First of all any power bounded operator $T$, i.e. such that $\sup_{n\ge 0}\|T^n\|<\infty$, satifies all of the above conditions. If $T$ is uniformly Kreiss bounded it is also 
Kreiss bounded; if $T$ is absolutely Ces\`aro bounded it is strongly Ces\`aro bounded, 
hence \cite{CCEL} uniformly Kreiss bounded.
The converse of the above implications do not hold in general. 

\medskip



Let $T$ be absolutely Ces\`aro bounded on a Banach space $X$. Then, see \cite[Proposition 3.1]{CCEL}, $\|T^n\|=O(n^{1-\varepsilon})$ for some $\varepsilon \in (0,1)$ and this estimate is best possible in general Banach spaces. Earlier, the estimate $\|T^n\|=o(n)$ was proved in \cite{BBMP}. If $X$ is a Hilbert space then, by Theorem 4.4 of \cite{CCEL}, 
$\|T^n\|=O(n^{1/2-\varepsilon})$ for some $\varepsilon\in (0,1/2)$ and this is best possible. 

\medskip

Let $T$ be Kreiss bounded on a Banach space. Then, by \cite{LN}, $\|T^n\|=O(n)$ and this is best possible in general Banach spaces by an example of 
Shields \cite{Sh}. If $X$ is a Hilbert space then, see \cite[Theorem 4.1]{CCEL} or \cite{BM}, $\|T^n\|=O(n/\sqrt {\log n})$. We do not know whether this is optimal. As far as we know the only result in that direction is that for every $\varepsilon\in (0,1)$, there exists $T$ on a Hilbert space that is uniformly Kreiss bounded and such that $\|T^n\|=O(n^{1-\varepsilon})$.
 This is proved in \cite{BM}, where in fact $T$ is even strongly Ces\`aro bounded \cite{CCEL}.

\medskip

In this paper we obtain estimates of $\|T^n\|$ for absolutely Ces\`aro bounded,  strongly
Ces\`aro bounded or Kreiss bounded operators, according to the type and/or cotype of $X$. 
Some results only hold on UMD spaces, see later for the definitions. 
Estimates when $X=L^p(\Omega,\mu)$, $1<p<\infty$, are obtained as corollaries.


\medskip

\bigskip

\section{Growth of the powers for absolutey Ces\`aro bounded operators}


In this section we study the growth rate of $\|T^n\|$ when $T$ is an absolutely Ces\`aro bounded operator on a Banach space of type $p$ and 
cotype $q$, and then apply the results to $L^p$ spaces. We recall the definitions \cite[p. 151]{AK}.

{\bf Definition.} A Banach space $X$ is said to be {\it of type $p\in[1,2]$} if there exists $K>0$ 
such that for any $n\in \N$ and any $x_1,\ldots , x_n\in X$, one has 
$$
\E(\|\varepsilon_1 x_1+\ldots +\varepsilon_nx_n\|^p)\le K( \|x\|^p+\ldots +\|x_n\|^p\,)\, , 
$$
where  $(\varepsilon_1,\ldots , \varepsilon_n)$ are iid Rademacher random variables (defined on $[0,1]$
with Lebesgue's measure $\lambda$; the expectation $\E$ is integration, see \cite[p. 145-6]{AK}). 

A Banach space $X$ is said to be of finite {\it  cotype $q\ge 2$} if there exists $K>0$ such 
that for any $n\in \N$  and any $x_1,\ldots , x_n\in X$, one has 
$$
\|x_1\|^q+\ldots +\|x_n\|^q\le K\E(\|\varepsilon_1 x_1+\ldots +\varepsilon_nx_n\|^q)\,. 
$$

A Banach space is said to be of \emph{cotype} $\infty$ if 
there exists $K>0$ such 
that for any $n\in \N$  and $x_1,\ldots , x_n\in X$, one has 
\begin{equation}\label{cotype-infini}
\max_{1\le i\le n}\|x_i\|\le K\E(\|\varepsilon_1 x_1+\ldots +\varepsilon_nx_n\|)\,. 
\end{equation}

\smallskip

\medskip

Every Banach space is of type 1 and (using for instance $(ii)$ of Proposition 
\ref{contraction-principle} below) of cotype $\infty$. A Banach space with type $1<p\le 2$ 
is said to have non-trivial type, and a Banach space with cotype $q\in [2,\infty)$ is said to 
have finite cotype.  If $X$ has non trivial type, it has finite cotype (see Theorem 7.3.11 page 98 of \cite{HNVW2}) but the converse is wrong (any $L^1$ space has cotype 2 but has trivial type, see page 154 of \cite{HNVW2}). If $X$ is of 
type $p> 1$ then $X^*$ is of (finite) cotype $q=p/(p-1)$ (see Theorem 7.1.13 page 63  \cite{HNVW2}). If $X$ is of finite cotype $q$, $X^*$ may be of trivial type (take $X=L^1$ again) but $X^*$ is of non trivial type $p=q/(q-1)$ if we further assume that $X$ has non trivial type (see Theorem 7.4.10 page 114 of \cite{HNVW2} and recall that $X$ is $K$-convex if and only if it has non trivial type). Finally, let us mention that $X$ has non trivial type if and only if $X^*$ does (see page 124 of \cite{HNVW2}).

\medskip

Typical examples of Banach spaces with non trivial type and finite cotype are given by the reflexive $L^p$-spaces. Indeed (see page 154 of \cite{AK}) when $X=L^p(\nu)$ for some $\sigma$-finite measure $\nu$, $X$ has type 
$p'=\min(p,2)$ and cotype $p''=\max(p,2)$ 
(and this is is best possible if the space is not finite dimensional).
More generally (combine Theorem 10.1 with Propositions 10.1 and 10.2 of \cite{Pisier}) uniformly convexifiable Banach spaces have non trivial type and finite cotype. Those spaces are again relexive. 
\medskip

Now, for a general Banach space,  there is no relation between the property of being reflexive and the property of having non trivial type and finite cotype. Taking $1< p_n<\infty$ with $\lim_n p_n=+\infty$  and for $X$ the $\ell^2$ direct sum of 
$\ell^{p_n}$ one obtains a reflexive Banach space with trivial type and only finite cotype ($\ell^{p_n}$ has type 2 but the best constant in the definition must go to $\infty$ as $p_n\to \infty$ since $\ell^\infty$ has trivial type). Notice that this example is such that $X^*$ has cotype 2).  Moreover, for every $\varepsilon>0$ there 
exists a non reflexive Banach space with type 2 and cotype 
$2+\varepsilon$ by Corollary 12.20 page 492 of \cite{Pisier}. However, one cannot take $\varepsilon=0$ since by a result of Kwapie\'n (see Theorem 7.3.1 page 89 of \cite{HNVW2}) every Banach space with type 2 and cotype 2 is isomorphic to a Hilbert space.

\medskip

We shall need a somewhat direct consequence of the definition of type and cotype. By Proposition 9.11 of \cite{LT}, if $X$ is of finite cotype $q$ then, for every independent integrable and centered (i.e. $\E(\xi_i)=0$) $X$-valued variables $\xi_1,\ldots \xi_n$, we have 
\begin{equation}\label{LT-cotype}
\E(\|\xi_1+\ldots +\xi_n\|^q)\ge 
C_q\E(\|\xi_1\|^q)+\ldots +\E(\|\xi_n\|^q)\, .
\end{equation}
When $X$ has type $p$ we have a reverse inequality 
\begin{equation}\label{LT-type}
\E(\|\xi_1+\ldots +\xi_n\|^p)\le 
C_p\E(\|\xi_1\|^p)+\ldots +\E(\|\xi_n\|^p)\, .
\end{equation}

\medskip

 We will need Kahane-Khintchine's inequalities \cite[p. 148]{AK},  
which we recall for convenience in the following form.

\begin{theorem}
Let $X$ be a Banach space. For every $p,q>0$ there exists $C_{p,q}>0$ such that for every 
$x_1,\ldots , x_n\in X$, 
$$
(\E(\|\varepsilon_1x_1+\ldots +\varepsilon_n x_n\|^p)^{1/p}\le 
C_{p,q}(\E(\|\varepsilon_1x_1+\ldots +\varepsilon_n x_n\|^q)^{1/q} \, .
$$
\end{theorem}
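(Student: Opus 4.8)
The plan is to reduce the whole statement to a single tail estimate for the Rademacher sum $S:=\varepsilon_1x_1+\cdots+\varepsilon_nx_n$ and then recover every moment comparison by integrating the tail. Write $\mu:=\E\|S\|$. Since the underlying measure is a probability, Jensen shows that $r\mapsto(\E\|S\|^r)^{1/r}$ is nondecreasing, so the asserted inequality is trivial with $C_{p,q}=1$ whenever $p\le q$; all the content lies in bounding a large moment by a small one. By the log-convexity of $r\mapsto\log\|S\|_{L^r}$ (in the variable $1/r$) together with H\"older's inequality, it suffices to establish the single family of estimates
\begin{equation}\tag{$\star$}
(\E\|S\|^p)^{1/p}\le C_p\,\E\|S\| , \qquad p\ge 1 ,
\end{equation}
and then interpolate downward to the exponents $q<1$. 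So I would concentrate on $(\star)$.

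The engine consists of two facts about sums of independent \emph{symmetric} summands. First, a domination of each term by the mean: from the triangle inequality $\|a+x\|+\|a-x\|\ge 2\|x\|$, averaging over $\varepsilon_k$ conditionally on the remaining signs gives $\E\|S\|\ge\|x_k\|$ for every $k$, hence
\[
\mu=\E\|S\|\ \ge\ a:=\max_{1\le k\le n}\|x_k\| .
\]
Second, the Hoffmann--J\o rgensen inequality: for independent symmetric $X$-valued summands there are absolute constants such that, for all $t>0$,
\[
\P\big(\|S\|>2t+a\big)\le 4\,\P\big(\|S\|>t\big)^2 .
\]
The feature special to Rademacher sums is that the usual random term $\max_k\|\varepsilon_k x_k\|$ collapses to the \emph{deterministic} constant $a$, and by the first fact $a\le\mu$.

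I would then iterate. Choosing $t_0\asymp\mu$ so that Chebyshev gives $\P(\|S\|>t_0)\le 1/8$, and setting $t_{j+1}=2t_j+a$, one gets $t_j\lesssim 2^j\mu$ while the inequality forces $\P(\|S\|>t_j)\le 8^{-2^j}$. This doubly exponential decay, inserted into $\E\|S\|^p=p\int_0^\infty t^{p-1}\P(\|S\|>t)\,dt$, yields $(\star)$ with $C_p$ depending on $p$ only, uniformly in $n$ and in the space $X$. Finally, for $0<q<1$, log-convexity gives $\|S\|_{L^1}\le\|S\|_{L^q}^{\theta}\,\|S\|_{L^p}^{1-\theta}$ for a suitable $p>1$ and $\theta\in(0,1)$, and substituting $(\star)$ absorbs the $L^p$ factor, producing $\|S\|_{L^1}\le C\,\|S\|_{L^q}$; chaining all these inequalities covers every pair $0<q<p<\infty$.

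The main obstacle is the deviation inequality itself, whose proof rests on the symmetrization/reflection argument and on truncating at the first partial sum that exceeds level $t$; one must verify carefully that the additive constant produced is exactly the deterministic $a=\max_k\|x_k\|$ and that $a\le\mu$, since this is precisely what keeps the iteration levels $t_j$ proportional to the first moment. Everything else — the monotonicity step, the tail-integral computation, and the interpolation to exponents $q<1$ — is routine.
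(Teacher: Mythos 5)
Your proposal cannot be compared with a proof in the paper, because the paper gives none: Kahane--Khintchine's inequality is stated there purely as a quoted tool, with a citation to \cite[p.~148]{AK}. Judged on its own merits, your blind argument is correct and is essentially the classical proof of Kahane's inequality (the route taken, e.g., in Ledoux--Talagrand \cite{LT}). The two pillars are sound: averaging the triangle inequality over a single sign does give $\max_k\|x_k\|\le\E\|S\|$, and for Rademacher sums the Hoffmann--J\o rgensen inequality does collapse to $\P(\|S\|>2t+a)\le 4\,\P(\|S\|>t)^2$ with the \emph{deterministic} $a=\max_k\|x_k\|\le\mu$, since $\P(\max_k\|\varepsilon_kx_k\|>a)=0$. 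Starting the iteration at $t_0=8\mu$ (Markov) and integrating the resulting doubly exponential tail against $p\,t^{p-1}\,dt$ indeed yields $(\E\|S\|^p)^{1/p}\le C_p\,\E\|S\|$ uniformly in $n$ and in $X$, and the downward interpolation $\|S\|_{L^1}\le\|S\|_{L^q}^{\alpha}\|S\|_{L^p}^{1-\alpha}$ with absorption of the $L^p$ factor correctly handles $0<q<1$. Two caveats, neither fatal: your iteration constant is slightly off (from $4p_{j+1}\le(4p_j)^2$ and $p_0\le 1/8$ one gets $p_j\le 2^{-2^j-2}$ rather than $8^{-2^j}$, which changes nothing), and the argument is complete only once the Hoffmann--J\o rgensen inequality and the L\'evy reflection inequality it rests on are themselves proved --- you flag this honestly, and the stopping-time sketch you give (first partial sum exceeding level $t$, jump bounded by $a$, reflection applied twice to get the square) is the right one. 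What your route buys is self-containedness; what the paper's citation buys is brevity, which is reasonable here since the inequality is classical and serves only as an auxiliary tool.
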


We will need the following corollaries of Kahane's contraction principle 
\cite[Theorem 6.1.13(ii)]{HNVW2}. The first follows by application to $b_k\xi_k$ with
$a_k=1/b_k$, and the second  follows from \cite{HNVW2} by taking $a_k=1$ for $k \in I$ 
and $0$ for $k \notin  I$.

\begin{proposition} \label{contraction-principle}
Let $1 \le p \le \infty$ and let $(\xi_k)_{k=1}^n$ be independent $\mathbb R$-symmetric
$X$-valued random variables in $L^p(\Omega,\lambda;X)$. 
Then for $I \subset J \subset \{1,2,\dots,n\}$ we have:
\smallskip

(i) $\displaystyle{\E(\|\sum_{k\in I}  \xi_k\|^p)  \le 
\Big(\frac\pi 2\Big)^p \max_{j \in I}\Big|\frac1{b_{j}}\Big|^p \E(\| \sum_{k\in I} b_k \xi_k\|^p)}$ 
when $b_j \ne 0$ for any $j$. 
\smallskip

(ii) $\E(\|\sum_{k\in I} \xi_k\|^p) \le  \E(\|\sum_{k\in J} \xi_k\|^p)$.
\end{proposition}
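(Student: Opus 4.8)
The plan is to obtain both inequalities as immediate specializations of Kahane's contraction principle \cite[Theorem 6.1.13(ii)]{HNVW2}, which states that for independent $\R$-symmetric $X$-valued variables $(\eta_k)$ in $L^p$ and scalars $(a_k)$,
\[
\Big(\E\,\big\|\sum_k a_k\eta_k\big\|^p\Big)^{1/p}\le \frac{\pi}{2}\,\max_k|a_k|\,\Big(\E\,\big\|\sum_k \eta_k\big\|^p\Big)^{1/p},
\]
with the factor $\pi/2$ improved to $1$ when the $a_k$ are real. The whole content then reduces to choosing the multipliers $a_k$ and checking that the transformed variables still meet the hypotheses.

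For (i), I would apply this to the variables $\eta_k:=b_k\xi_k$, $k\in I$, with multipliers $a_k:=1/b_k$. The $\eta_k$ are independent (each depends on $\xi_k$ only) and remain $\R$-symmetric, since $\R$-symmetry is preserved under multiplication by an arbitrary, possibly complex, scalar: $-b_k\xi_k=b_k(-\xi_k)$ has the same law as $b_k\xi_k$. As $\sum_{k\in I}a_k\eta_k=\sum_{k\in I}\xi_k$ and $\max_{k\in I}|a_k|=\max_{k\in I}|1/b_k|$, raising the displayed inequality to the power $p$ gives exactly the stated bound. Here the $1/b_k$ are genuinely complex in general, so one really needs the complex form carrying the factor $(\pi/2)^p$.

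For (ii), I would instead apply the principle to the single family $(\xi_k)_{k\in J}$ with the real $0$--$1$ multipliers $a_k=1$ for $k\in I$ and $a_k=0$ for $k\in J\setminus I$. Then $\sum_{k\in J}a_k\xi_k=\sum_{k\in I}\xi_k$ and $\max_{k\in J}|a_k|\le 1$; because the multipliers are real the constant is $1$, so no factor is lost. Alternatively, (ii) admits a one-line proof by conditional Jensen: $\zeta:=\sum_{k\in J\setminus I}\xi_k$ is independent of $\eta:=\sum_{k\in I}\xi_k$ and centered, whence $\eta=\E[\eta+\zeta\mid \eta]$ and convexity of $x\mapsto\|x\|^p$ yields $\E\|\eta\|^p\le\E\|\eta+\zeta\|^p$.

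There is essentially no obstacle, since all the work sits in the cited contraction principle; the only points deserving care are the verification in (i) that passing to $b_k\xi_k$ preserves $\R$-symmetry and independence, and the observation that (i) requires the complex constant whereas (ii), using $0$--$1$ multipliers, does not.
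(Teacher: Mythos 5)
Your proposal is correct and is essentially identical to the paper's own argument, which likewise derives (i) by applying Kahane's contraction principle \cite[Theorem 6.1.13(ii)]{HNVW2} to the variables $b_k\xi_k$ with multipliers $a_k=1/b_k$, and (ii) by taking $a_k=1$ for $k\in I$ and $a_k=0$ for $k\notin I$. Your added checks (preservation of $\R$-symmetry under scalar multiplication, and the real-versus-complex constant) and the alternative Jensen argument for (ii) are sound but not needed beyond what the paper states.
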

\medskip

We are now in position to state and prove the main result of this section.

\begin{theorem}
Let $T$ be an absolutely Ces\`aro bounded operator on a Banach space $X$ of type $1\le p\le 2$.
Then $\|T^n\|= O(n^{1/p})$. Moreover, there exists  $C>0$ such that for every $x\in X$, every 
sequence of iid Rademacher variables $(\varepsilon_n)_{n\ge 0}$ and every $n\in \N$, 
\begin{equation}\label{pq-bounded1}
\E(\|\sum_{k=0}^{n-1}\varepsilon_kT^kx\|^p)\le C n\|x\|^p\, .
\end{equation}

If in addition $X$ is of finite cotype $q\ge 2$, then 
$\|T^n\|=O(n^{1/p}/(\log n)^{1/q})$, and there exists $\tilde C>0$ such that for every $x\in X$ 
and every $n\in \N$, 
\begin{equation}\label{pq-bounded}
\sum_{k=0}^{n-1}\|T^kx\|^q\le \tilde C n^{q/p}\|x\|^q\, .
\end{equation}
\end{theorem}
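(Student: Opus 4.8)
The plan is to treat the randomized estimate \eqref{pq-bounded1} as the engine and read off everything else from it. Granting \eqref{pq-bounded1}, the operator bound $\|T^n\|=O(n^{1/p})$ is immediate: by part (ii) of Proposition \ref{contraction-principle}, a single summand is dominated in $p$-th moment by the whole Rademacher sum, so for every $0\le k<n$ one has $\|T^kx\|^p=\E(\|\varepsilon_kT^kx\|^p)\le\E(\|\sum_{j=0}^{n-1}\varepsilon_jT^jx\|^p)\le Cn\|x\|^p$, whence $\|T^k\|\le(Cn)^{1/p}$. In the same spirit, \eqref{pq-bounded} will follow from \eqref{pq-bounded1} by passing through the cotype: inequality \eqref{LT-cotype} applied to the independent symmetric variables $\varepsilon_kT^kx$ gives $\sum_{k=0}^{n-1}\|T^kx\|^q\le C_q^{-1}\E(\|\sum_{k=0}^{n-1}\varepsilon_kT^kx\|^q)$, and the Kahane--Khintchine inequalities bound the right-hand side by a constant times $\big(\E(\|\sum_{k=0}^{n-1}\varepsilon_kT^kx\|^p)\big)^{q/p}\le(Cn)^{q/p}\|x\|^q$, which is exactly \eqref{pq-bounded}.

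The heart of the matter is therefore \eqref{pq-bounded1}, and here the absolute Ces\`aro boundedness \eqref{ACB} must be used on the \emph{random} vectors $y:=\sum_{k=0}^{n-1}\varepsilon_kT^kx$ rather than on single orbits. I would first record the type-$p$ estimate $\E(\|y\|^p)\le K\sum_{k=0}^{n-1}\|T^kx\|^p$, but this cannot be the whole story: for a weighted shift applied to a basis vector the signs are irrelevant, and \eqref{ACB} then only controls the orbit through $\sum_{m=j}^{j+n-1}\|T^mx\|\le Cn\|T^jx\|$, which already allows $\sum_{k<n}\|T^kx\|^p$ to be as large as $n^{p(1-\varepsilon)}\|x\|^p$; applying \eqref{ACB} orbit-by-orbit thus yields no more than $\|T^n\|=O(n^{1-\varepsilon})$. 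The extra gain has to come from \eqref{ACB} applied to $y$ itself: for each realization of the signs, $\frac1N\sum_{m=0}^{N-1}\|T^my\|\le C\|y\|$, and integrating in $\varepsilon$ turns the left-hand side into an average of the $p$-th Rademacher moments of the \emph{shifted} blocks $\sum_{k=0}^{n-1}\varepsilon_kT^{k+m}x$. Combining this with the type and with Kahane--Khintchine, I would set up an induction (or a bootstrap across dyadic scales) on $\Phi(n):=\sup_{\|x\|\le1}\E(\|\sum_{k<n}\varepsilon_kT^kx\|^p)$ and aim to close it at $\Phi(n)=O(n)$.

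The main obstacle is precisely this shift. Because $T$ is not power bounded, the factor $T^m$ in $T^my$ cannot be stripped off by the triangle inequality (that loses a factor $\|T^m\|\sim m^{1/p}$ and makes any naive two-block splitting $[0,2n)=[0,n)\cup[n,2n)$ diverge), so the only usable control on the blocks $\sum_k\varepsilon_kT^{k+m}x$ is the \emph{averaged} one furnished by \eqref{ACB}. Reconciling the pointwise-in-$m$ loss with the averaged-in-$m$ gain — that is, extracting genuine cancellation from the Ces\`aro averages of the randomized orbit rather than from a single deterministic orbit — is the delicate point, and it is exactly where the type-$p$ structure of $X$, through Kahane's contraction principle and the Kahane--Khintchine equivalence of moments, becomes indispensable.

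For the logarithmic refinement under the extra cotype-$q$ assumption I would start from \eqref{pq-bounded} and use that $T$ is submultiplicative along orbits. Choosing $x$ with $\|x\|=1$ and $\|T^nx\|\ge\frac12\|T^n\|$, the factorization $\|T^nx\|\le\|T^{n-k}\|\,\|T^kx\|$ gives $\|T^kx\|\ge\|T^n\|/(2\|T^{n-k}\|)$ for $0\le k<n$, and inserting this into \eqref{pq-bounded} yields $\|T^n\|^q\sum_{j=1}^n\|T^j\|^{-q}\lesssim n^{q/p}$. Lower bounding the sum by means of the already established $\|T^j\|=O(j^{1/p})$, the series $\sum_{j\le n}j^{-q/p}$ grows like $\log n$ precisely when $q=p$, which produces the factor $(\log n)^{-1/q}$ in the diagonal (Hilbert-type) case. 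For $q>p$ this series converges and the scheme only recovers $O(n^{1/p})$; obtaining the genuine $(\log n)^{-1/q}$ saving then calls for a finer, multiscale use of the cotype, combining on the order of $\log n$ dyadic blocks, and this is the more technical endpoint of the argument. In every case, though, the estimate on which everything rests is the randomized moment bound \eqref{pq-bounded1}.
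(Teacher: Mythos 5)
Your reductions at the two ends are correct and coincide with the paper's: $\|T^n\|=O(n^{1/p})$ follows from \eqref{pq-bounded1} by Proposition \ref{contraction-principle}(ii), and \eqref{pq-bounded} follows from \eqref{pq-bounded1} by cotype plus Kahane--Khintchine. But the two estimates that actually carry the theorem are not proved in your write-up; in both places you diagnose the obstacle and then stop. For \eqref{pq-bounded1}, your bootstrap on $\Phi(n)=\sup_{\|x\|\le1}\E(\|\sum_{k<n}\varepsilon_kT^kx\|^p)$ is never closed: you say yourself that the shifted blocks $\sum_k\varepsilon_kT^{k+m}x$ cannot be handled by the triangle inequality and that reconciling the pointwise loss with the averaged gain ``is the delicate point.'' The paper resolves exactly this point by a construction absent from your proposal: normalize the randomized orbit, $y_N:=\sum_{k=1}^{2^N}\varepsilon_kT^kx/(\|T^kx\|+1)$, and form the coupled Ces\`aro sum $u_N:=\sum_{j=0}^{2^N-1}(S^j\otimes T^j)y_N$, where $S$ is the shift on the Rademacher system ($S\varepsilon_n=\varepsilon_{n+1}$). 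Since $S^j\otimes T^j$ sends $\varepsilon_kT^kx$ to $\varepsilon_{k+j}T^{k+j}x$, the double sum collapses into a \emph{single} Rademacher sum whose coefficient in front of $\varepsilon_jT^jx$, for $2^{N-1}<j\le 2^N$, is $\sum_{k\le j}1/(\|T^kx\|+1)\gtrsim 2^N$ --- the lower bound \eqref{reciprocals2}, which is where \eqref{ACB} enters via the computation of \cite{CCEL}. The contraction principle then extracts $\|\sum_{j=2^{N-1}+1}^{2^N}\varepsilon_jT^jx\|_{L^p(X)}$ from $u_N$ with a gain of $2^N$, while on the other side stationarity ($\E\|(S^j\otimes T^j)y_N\|=\E\|T^jy_N\|$), the pointwise application of \eqref{ACB} for fixed signs, the type-$p$ bound $\E\|y_N\|^p\lesssim 2^N$ (available only because the summands of $y_N$ were normalized), and Kahane--Khintchine give $\|u_N\|_{L^p(X)}\lesssim 2^{N+N/p}$. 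Combining yields \eqref{blocks}, and summing dyadic blocks with \eqref{LT-type} gives \eqref{pq-bounded1}. This coupling is the missing engine; without it your argument does not get past the obstacle you describe.

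The logarithmic refinement is likewise left open. Your submultiplicativity scheme gives $\|T^N\|^q\sum_{j\le N}\|T^j\|^{-q}\lesssim N^{q/p}$, and the harmonic-type divergence of $\sum_{j\le N}j^{-q/p}$ requires $q\le p$; since type forces $p\le2\le q$, this happens only when $p=q=2$, so for every genuinely non-Hilbertian situation (in particular $L^p$, $p\ne2$, which is the point of this paper) your method returns only $O(n^{1/p})$, as you concede. The paper's multiscale argument needs different input: absolute Ces\`aro boundedness makes $T$, hence $T^*$, strongly Ces\`aro bounded \cite[Corollary 3.7]{CCEL}; the Rademacher orthogonality identity $(Q-P)\langle x^*,T^Nx\rangle=\E\langle\sum_{k=P}^{Q-1}\varepsilon_kT^{*k}x^*,\sum_{\ell=P}^{Q-1}\varepsilon_\ell T^{N-\ell}x\rangle$ together with H\"older and the $T^*$-estimate yields the per-block lower bound \eqref{claim-BBMP1}, so each of the $\sim\log N$ disjoint dyadic blocks $\sum_{k=N+1-2^{\ell+1}}^{N-2^\ell}\varepsilon_kT^kx$ has $q$-th moment $\gtrsim\|T^Nx\|^q$; since these blocks are independent, \eqref{LT-cotype} and Proposition \ref{contraction-principle}(ii) sum them inside $\E\|\sum_{k<N}\varepsilon_kT^kx\|^q$, which Kahane--Khintchine and \eqref{pq-bounded1} bound by $N^{q/p}\|x\|^q$, producing the $(\log N)^{-1/q}$ factor. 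So the duality step through $T^*$ and the independence-of-blocks cotype argument, not just ``a finer multiscale use of the cotype,'' are what is required, and neither appears in your proposal.
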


\begin{remark}
As mentionned previously, when $p>1$ then $X$ automatically has finite cotype and the second part  of the theorem applies. Both items of the theorem apply to uniformly convexifiable Banach spaces.
\end{remark}

\begin{proof}
The bound  $\|T^n\|= O(n^{1/p})$ will follow from (\ref{pq-bounded1}) and item $(ii)$ of Proposition \ref{contraction-principle}.
\medskip

Let us prove \eqref{pq-bounded1}.
By the contraction principle, it suffices to prove that, for every $N\ge 0$, 
$$
\E(\|\sum_{k=0}^{2^N-1}\varepsilon_kT^kx\|^p)\le C 2^N\|x\|^p\, .
$$
Using that $X$ has type $p$, by \eqref{LT-type}, it suffices to prove that, for every $N\ge 1$, 
\begin{equation} \label{blocks}
\E(\|\sum_{k=2^{N-1} }^{2^N-1}\varepsilon_kT^kx\|^p)\le C 2^N\|x\|^p\, .
\end{equation}

Let $N\in \N$. Fix $x \in X$ with $\|x\|=1$. Let $(\varepsilon_n)_{n\in \N}$ be the 
Rademacher system on $([0,1],\lambda)$. Denote by $S$ the Koopman operator 
associated with multiplication by 2 (mod 1), so that $S\varepsilon_n=\varepsilon_{n+1}$.

Define
$$
y_{N}:=  \sum_{k=1}^{2^N}\frac{\varepsilon_k T^kx}{\|T^kx\|+1}
$$
and 
$$
u_{N}    := \sum_{j=0}^{2^N-1} (S^j \otimes T ^j) y_{N} =
\sum_{j=1}^{2^N}\varepsilon_j T^j x\sum_{k=1}^j \frac1{\|T^kx\|+1} +
\sum_{j=2^N+1}^{{2^{N+1}}}\varepsilon_j T^j x\sum_{k=j-2^N}^{2^N} \frac1{\|T^kx\|+1}\, . 
$$

Since $T$ is absolutely Ces\`aro bounded, by the computations in (17) of \cite{CCEL} with $\varepsilon=1$ and $N$ instead of $2^{N-1}$ (the computations in \cite{CCEL}  are done in the Hilbert case but just make use of the norm, hence apply equally in general Banach spaces), we obtain
\begin{equation} \label{reciprocals2}
\sum_{k=1}^N\frac1{\|T^k x\|+1}\ge \tilde C N \, , 
\end{equation}
for some $\tilde C>0$ (independent of  $N$ and of $x$ with norm 1). 

We now use the contraction principle twice, first item $(ii)$, then item $(i)$, and we use  \eqref{reciprocals2} for the last inequality, to obtain
$$
\frac\pi 2 \|u_N\|_{L^p(X)} \ge \frac\pi 2 
\Big\| \sum_{j=2^{N-1}+1}^{2^N}\varepsilon_jT^jx(\sum_{k=1}^j \frac1{\|T^kx\|+1}) \Big\|_{L^p(X)} \ge
$$
$$
\min_{2^{N-1}+1\le \ell\le 2^N}\Big(\sum_{k=1}^\ell \frac1{\|T^kx\|+1}\Big) 
\ \Big\| \sum_{j=2^{N-1}+1}^{2^N}\varepsilon_jT^jx \Big\|_{L^p(X)} =
$$
$$
\Big(\sum_{k=1}^{2^{N-1}+1} \frac1{\|T^kx\|+1}\Big)\ \Big\| \sum_{j=2^{N-1}+1}^{2^N}\varepsilon_jT^jx \Big\|_{L^p(X)}\ge
\frac{\tilde C}2 2^N \Big\| \sum_{j=2^{N-1}+1}^{2^N}\varepsilon_jT^jx \Big\|_{L^p(X)} \ .
$$
Hence
\begin{equation}\label{minor}
\Big\| \sum_{j=2^{N-1}+1}^{2^N}\varepsilon_jT^jx \Big\|_{L^p(X)}^p \le C' 2^{-Np} \|u_N\|_{L^p(X)}^p\, .
\end{equation}

Since $X$ is of type $p$, 
$$
\E(\|y_N\|^p)\le  K_p \sum_{k=1}^{2^N} \Big(\frac{\|T^kx\|}{\|T^kx\|+1}\Big)^p \le K_p 2^{N}\, . 
$$
Hence $\E(\|y_N\|)\le \E(\|y_N\|^p)^{1/p} \le K_p^{1/p} 2^{N/p}$.
By stationarity of the Rademacher system, $\E (\|(S^j\otimes T^j)y_N\|)= \E(\| T^j y_N\|)$
 and by the  absolute Ces\`aro boundedness of $T$, we obtain
\begin{equation} \label{uN-L1-norm}
\E(\|u_N\|)\le \sum_{j=0}^{2^N-1}\E(\| (S^j \otimes T ^j)y_{N}\|\le 
C \E(2^N\|y_N\|) \le D 2^{N+N/p}\, .
\end{equation}
Applying Kahane-Khintchine's inequality to the decomposition of $u_N$, we obtain
$$
\|u_N\|_{L^p(X)} =(E(\|u_N\|^p)^{1/p} \le C_{p,1}\E(\|u_N\|) \le C_{p,1}D2^{N+N/p}\ .
$$
Combining the last estimate with \eqref{minor} we obtain
$$
\E\Big( \Big\| \sum_{j=2^{N-1}+1}^{2^N}\varepsilon_jT^jx \Big\|^p\Big) \le \tilde K 2^{-Np}\E(\|u_N\|^p)
\le C2^N \ .
$$

\medskip

We now assume that $X$ has  finite cotype $q$. 

Using the definition of cotype, Kahane-Khintchine's inequalities and  (\ref{pq-bounded1}), we obtain
$$
\sum_{k=0}^{n-1} \|T^kx\|^q \le K\E(\| \sum_{k=0}^{n-1}\varepsilon_k T^kx \|^q) \le
 KC_{q,p}^q \E(\| \sum_{k=0}^{n-1}\varepsilon_k T^kx \|^p)^{q/p} \le
\tilde C n^{q/p}\|x\|^q\, ,
$$ 
which proves \eqref{pq-bounded}.  However, this yields only $\|T^n\|= O(n^{1/p})$.
\smallskip

Denote by $r=q/(q-1)$ the dual index of $q$.
Since $T$ is absolutely Ces\`aro bounded, it is strongly Ces\`aro bounded, and by
\cite[Corollary 3.7]{CCEL}  so is $T^*$. By Kahane's inequalities and \cite[Proposition 3.6]{CCEL}, 
for $x^* \in X^*$  and $Q>P \ge 0$ we have
$$
\Big( \E\big(\|\sum_{k=P}^{Q-1} \varepsilon_k T^{*k}x^* \|^r \big) \Big)^{1/r} \le
C_{r,1} \E(\|\sum_{k=P}^{Q-1} \varepsilon_k T^{*k}x^* \|) \le 2Q \cdot C_{r,1} K_{scb} \|x^*\|.
$$

For every $(x,x^*)\in X\times X^*$ and integers $N\ge Q>P \ge 0$,  we have 
\begin{gather*}
(Q-P)|\langle x^*, T^Nx \rangle|=
\Big|  \E\Big( \sum_{\ell=P}^{Q-1}\sum_{k=P}^{Q-1}
\langle \varepsilon_kT^{*k}x^*,\varepsilon_\ell T^{N-\ell}x\rangle\Big)  \Big| \le \\ 
  \E\Big( \Big| \langle \sum_{\ell=P}^{Q-1} \varepsilon_kT^{*k}x^*,
\sum_{k=P}^{Q-1} \varepsilon_\ell T^{N-\ell}x \rangle\Big|  \Big) \le \\
\Big(\E \Big(\big\| \sum_{k=P}^{Q-1}\varepsilon_k T^{*k}x^*\big \|^r\Big)\, \Big)^{1/r} \, 
\Big(\E \Big( \big\|\sum_{\ell=P}^{Q-1}\varepsilon_\ell T^{N-\ell}x\big\|^q\Big) \, \Big)^{1/q}\le
\\ CQ \|x^*\| \cdot
\Big(\E \Big( \big\|\sum_{\ell=P}^{Q-1}\varepsilon_\ell T^{N-\ell}x\big\|^q\Big) \, \Big)^{1/q}\, ,
\end{gather*}
with $C=2C_{r,1}K_{scb}$. Taking the supremum over $\{\|x^*\|=1\}$ we conclude that 
\begin{equation}\label{claim-BBMP1}
\frac{(Q-P)^q}{Q^q} \|T^Nx\|^q \le 
C^q \E \Big(\big\| \sum_{\ell=P}^{Q-1}\varepsilon_\ell T^{N-\ell}x\big \|^q\Big) \qquad \quad
0\le P<Q \le N\, .
\end{equation}
\smallskip

Fix $N\in \N$ and put $L:=\log (N/2)/\log 2$. It follows from \eqref{claim-BBMP1} 
that for every $0\le \ell \le L$,
\begin{equation} \label{block}
\E\Big(\big \|\sum_{k=N+1-2^{\ell +1}}^{N-2^\ell} \varepsilon_k T^k x\big \|^q\Big)\ge 
2^q \|T^Nx\|^q/C^q\, .
\end{equation}
Denote $Z_\ell: \sum_{k=N+1-2^{\ell +1}}^{N-2^\ell} \varepsilon_k T^k x$. Then $(z_\ell)$
are independent on  $\Omega=([0,1],\lambda)$.

 Since $X$ has cotype $q$, by \eqref{LT-cotype},
$$
\sum_{l=0}^L\E(\|z_l\|^q )\le K  \E(\| \sum_{l=0}^L \varepsilon_\ell z_\ell \|^q) \, .
$$
Using Item $(ii)$ of Proposition \ref{contraction-principle},
$$
 \E(\|\sum_{l=0}^L y_l\|^q)   \le
 \E(\|\sum_{k=0}^{N-1} \varepsilon_k T^k x\|^q)\, .
$$
 Combining with (\ref{block}), we obtain
\begin{gather*}
2^q (L+1) \|T^Nx\|^q /C^q \le \sum_{\ell=0}^L \E(\|z_\ell\|^q ) \le
 K \cdot \E(\|\sum_{k=0}^{N-1} \varepsilon_k T^k x\|^q) \, .
\end{gather*}
 Using Kahane's inequalities and (\ref{pq-bounded1}), we conclude that 
$\|T^N x\| \le C'N^{1/p} \|x\| / L^{1/q}$.
\end{proof}

\begin{remark} 1. When $X$ is of type $p=1$, the Theorem yields for $T$ absolutely Ces\`aro bounded that
$\|T^n\|=O(n/(\log n)^{1/q})$; the estimate $\|T^n\|=O(n^{1-\varepsilon})$ in 
\cite[Proposition 3.1]{CCEL} is better.

2. When $X$ has type $p>1$, the estimate $\|T^n\|=O(n^{1/p})$ for $T$  absolutely Ces\`aro bounded
should be compared with $O(n^{1- 1/K_{ac}})$ (with $K_{ac}$ the best constant in the definition of absolute Ces\`aro boundedness) given in  \cite[Proposition 3.1]{CCEL}
(with $p=1$ there). The estimate of the theorem is better when $K_{ac} > p/(p-1)$.
\end{remark}

\begin{corollary}\label{cor-Lp}
Let $T$ be absolutely Ces\`aro bounded on $L^p({\Omega}, \mu)$, $1\le p<\infty$, with 
$\mu$ $\sigma$-finite. Then, $\|T^n\|_p= O(n^{1/p'}/(\log n)^{1/p''})$ where $p'=\min(p,2)$
and $p''=\max(p,2)$.
Moreover, there exists $C_p>0$ such that 
\begin{equation}\label{Lp-bounded}
\sum_{k=0}^{N-1}\|T^kx\|_p^{p''}\le C_p N^{p''/p'}\|x\|_p^{p''}\, ,
\end{equation}
\end{corollary}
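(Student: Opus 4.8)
The plan is to derive the corollary directly from the main Theorem by specializing the type and cotype data of $L^p$ spaces. As recalled in the discussion preceding the theorem (see page 154 of \cite{AK}), the space $X=L^p(\Omega,\mu)$ with $\mu$ $\sigma$-finite has type $p'=\min(p,2)$ and finite cotype $p''=\max(p,2)$. Thus the hypotheses of the second (stronger) part of the Theorem are automatically satisfied for every $1\le p<\infty$: indeed, when $p>1$ the space has non-trivial type and hence finite cotype, while for $p=1$ we have $p'=1$ and the explicit cotype $p''=2$ is still available. So the first step is simply to invoke the Theorem with these specific values of the type and cotype exponents.

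With $p'$ and $p''$ in hand, the growth estimate $\|T^n\|_p=O(n^{1/p'}/(\log n)^{1/p''})$ is exactly the conclusion $\|T^n\|=O(n^{1/p}/(\log n)^{1/q})$ of the Theorem, read with $p$ replaced by $p'$ and $q$ replaced by $p''$. Likewise, inequality \eqref{Lp-bounded} is precisely \eqref{pq-bounded} of the Theorem under the same substitution, namely $\sum_{k=0}^{N-1}\|T^kx\|^q\le \tilde C\, n^{q/p}\|x\|^q$ becomes $\sum_{k=0}^{N-1}\|T^kx\|_p^{p''}\le C_p\, N^{p''/p'}\|x\|_p^{p''}$. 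Hence the entire statement follows by transcription, with $C_p$ the constant $\tilde C$ produced by the Theorem (depending on the type and cotype constants of $L^p$, which in turn depend only on $p$).

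The only point requiring a word of care is the endpoint $p=1$, where $L^1$ has trivial type. The first part of the Theorem (giving merely $\|T^n\|=O(n^{1/p})$ and \eqref{pq-bounded1}) applies to any Banach space of type $p'\ge 1$, so it applies with $p'=1$; the refinement using cotype then only requires that $X$ have \emph{some} finite cotype $q$, and $L^1$ does have cotype $2$. Thus even at $p=1$ one legitimately obtains the logarithmic improvement $\|T^n\|_1=O(n/(\log n)^{1/2})$ and the corresponding square-function bound with exponent $p''=2$. This is entirely consistent with the first remark following the Theorem.

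Since the corollary is a pure specialization, there is no genuine obstacle: the work has already been done in the Theorem, and the proof amounts to recording the correct exponents for $L^p$ and citing \cite{AK}. The one thing to state clearly, to forestall any concern, is that the second part of the Theorem is applicable across the full range $1\le p<\infty$ because finite cotype (with the explicit value $p''$) holds throughout, non-trivial type being needed only to reach the sharper growth rate and being automatic once $p>1$.
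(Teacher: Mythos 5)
Your proposal is correct and follows exactly the paper's own (one-line) proof: the corollary is a direct specialization of the Theorem using the fact, cited from \cite[p.\ 154]{AK}, that $L^p(\Omega,\mu)$ has type $p'=\min(p,2)$ and cotype $p''=\max(p,2)$. Your careful treatment of the endpoint $p=1$ (trivial type but cotype $2$, so the logarithmic refinement still applies) is also consistent with the paper, which makes the same point in the remark following the Theorem.
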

\begin{proof} It is well known \cite[p. 154]{AK} that $L^p({\Omega},\mu)$ is of type $p'$ 
and of cotype $p''$.  
\end{proof}

\begin{remark}
 1. When $1\le p\le 2$, the bound \eqref{Lp-bounded} reads 
$\sum_{k=0}^{N-1}\|T^kx\|_p^2\le C_p N^{2/p}\|x\|_p^2$, which is implied by the bound 
$\sum_{k=0}^{N-1}\|T^kx\|_p^p\le C_p^{p/2} N\|x\|_p^p$. When $p\ge 2$, the bound \eqref{Lp-bounded} 
reads $\sum_{k=0}^{N-1}\|T^kx\|_p^p\le C_p N^{p/2}\|x\|_p^p$, which is implied by the bound 
$\sum_{k=0}^{N-1}\|T^kx\|_p^2\le C_p^{2/p} N\|x\|_p^p$. 

2. For $p=2$, \eqref{Lp-bounded} shows that $T$ is Ces\`aro square bounded, as was also shown in
\cite[Theorem 4.3]{CCEL}.  However, the bound on $\|T^n\|$ in \cite[Theorem 4.3]{CCEL}, obtained 
from \cite[Proposition 3.1]{CCEL},  is better than that of Corollary \ref{cor-Lp}.

3. Comparing the examples of \cite[Theorem 3.3]{CCEL} with Corollary  \ref{cor-Lp}, we see that 
when $p\in [1,2]$, the corollary gives the correct bound for $\|T^n\|$, up to some $\varepsilon>0$ 
in the exponent. One may wonder whether the bound $\|T^n\|=O(n^{1/2})$, provided by the corollary 
when $p>2$, is the right one; in the examples of \cite[Theorem 3.3]{CCEL}, 
$\|T^n\| \le (n+1)^{1/p} <(n+1)^{1/2}$.  We provide below another class of examples, which show 
that for $p>2$, the bound in Corollary \ref{cor-Lp} is close to optimal.
\end{remark}

\medskip

Let $\TT:=\{\gamma\in \C\, :\, |\gamma|=1\}$ and let $\lambda$ be that Haar measure on $\TT$.
\begin{proposition}
Let $2<p<\infty$. For $\varepsilon\in (0,1/2)$ there exists an absolutely Ces\`aro bounded $T$ on 
$L^p(\TT,\lambda)$ such that  $\|T^n \|\asymp n^{1/2-\varepsilon }$
\end{proposition}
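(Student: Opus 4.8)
The plan is to realize $T$ as a weighted shift in the Fourier basis of $L^p(\TT,\lambda)$. Writing $e_k(\gamma)=\gamma^k$ for $k\in\Z$, I would fix weights $(w_k)_{k\in\Z}$ with $|w_k|\le 1$ and set $Te_k=w_ke_{k+1}$, i.e. $T=M_\gamma D_w$, where $M_\gamma$ is multiplication by $\gamma$ (an isometry of every $L^p$) and $D_w$ is the Fourier multiplier with symbol $w$. Then $T^ne_k=W_n(k)\,e_{k+n}$ with $W_n(k):=\prod_{j=0}^{n-1}w_{k+j}$, so $T^n=M_{\gamma^n}D_{W_n}$ and, $M_{\gamma^n}$ being isometric, $\|T^n\|_p$ equals the $L^p(\TT)$ Fourier-multiplier norm of the window product $W_n$. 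The whole problem is thus reduced to choosing the weights so that (a) these multiplier norms satisfy $\|D_{W_n}\|_{p\to p}\asymp n^{1/2-\varepsilon}$, and (b) $T$ is absolutely Ces\`aro bounded. Since $|W_n|\le 1$, each $D_{W_n}$ is a contraction of $L^2$, so $T$ is automatically power bounded on $L^2$ and all the growth is produced on the $L^p$-side, $p>2$, where the multiplier norm may strictly exceed the sup norm of the symbol.

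For the exponent I would take the $w_k$ unimodular, $w_k=e^{i(\Psi(k+1)-\Psi(k))}$ for a real phase $\Psi$, so that $W_n(k)=e^{i(\Psi(k+n)-\Psi(k))}$. The model to keep in mind is the imaginary power $k\mapsto|k|^{it}$, whose $L^p(\TT)$ multiplier norm is two-sidedly comparable to $(1+|t|)^{1/2-1/p}$ for $p>2$ (transfer the classical $\R$-estimate for imaginary powers of the Laplacian via de Leeuw's theorem). Choosing $\Psi$ so that $\Psi(k+n)-\Psi(k)$ behaves, in the relevant frequency range, like $t_n\log|k|$ with $t_n$ growing at a prescribed polynomial rate, one matches $(1+t_n)^{1/2-1/p}\asymp n^{1/2-\varepsilon}$ for the desired $\varepsilon\in(0,1/2)$. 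The upper bound $\|D_{W_n}\|_{p\to p}\le C\,n^{1/2-\varepsilon}$ I would obtain from a Marcinkiewicz/Littlewood--Paley multiplier theorem, available precisely because $L^p$ ($1<p<\infty$) is UMD, while the matching lower bound comes from testing $D_{W_n}$ against an explicit extremiser (a lacunary or suitably modulated trigonometric sum).

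For absolute Ces\`aro boundedness I would not bound the terms individually, since $\sum_{k<n}\|T^kf\|_p$ with $\|T^k\|_p\asymp k^{1/2-\varepsilon}$ is far too large term by term; the point is cancellation among the $T^kf$ for fixed $f$. Using $p\ge 2$ one has $\sum_{k<n}|a_k|^p\le(\sum_{k<n}|a_k|^2)^{p/2}$ pointwise, hence by the power mean inequality
\[
\frac1n\sum_{k=0}^{n-1}\|T^kf\|_p\le\Big(\frac1n\sum_{k=0}^{n-1}\|D_{W_k}f\|_p^p\Big)^{1/p}\le n^{-1/p}\,\Big\|\big(\sum_{k=0}^{n-1}|D_{W_k}f|^2\big)^{1/2}\Big\|_p .
\]
Thus it suffices to prove the square-function bound $\big\|(\sum_{k<n}|D_{W_k}f|^2)^{1/2}\big\|_p\le C\,n^{1/p}\|f\|_p$, which on $L^2$ is immediate (each $D_{W_k}$ is a contraction) and on $L^p$ I would deduce from the vector-valued Littlewood--Paley/multiplier calculus on the UMD space $L^p$, exploiting that the symbols $W_k$ are essentially separated across the Littlewood--Paley scales.

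I expect the main obstacle to be the simultaneous two-sided control: arranging the phase $\Psi$ so that the multiplier norms grow exactly like $n^{1/2-\varepsilon}$ (both the upper and the lower bound being sharp) while the square-function estimate still holds with the gain $n^{1/p}$ rather than the trivial $n^{1/2}$ --- these requirements pull the choice of weights in opposite directions, and reconciling them is where the real work lies. The upper multiplier bound and the Ces\`aro square-function bound would rest on Littlewood--Paley theory (UMD), whereas the lower bound $\|T^n\|_p\gtrsim n^{1/2-\varepsilon}$ would be extracted from explicit test functions; consistency with Corollary~\ref{cor-Lp}, which forces any absolutely Ces\`aro bounded $T$ on $L^p$ to satisfy $\|T^n\|_p=O(n^{1/2})$, confirms that $1/2-\varepsilon$ is the best exponent one can hope to reach by this scheme.
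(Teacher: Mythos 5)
Your reduction of $\|T^n\|_p$ to the $L^p(\TT)$-multiplier norm of the window product $W_n(k)=\prod_{j=0}^{n-1}w_{k+j}$ is correct, but the construction you then sketch cannot produce the exponent $1/2-\varepsilon$ for arbitrary $\varepsilon\in(0,1/2)$, and this is a structural obstruction, not a technicality to be "reconciled". With unimodular weights the phases telescope: $\Psi(k+n)-\Psi(k)=\sum_{j=0}^{n-1}\bigl(\Psi(k+j+1)-\Psi(k+j)\bigr)$, so for fixed $k$ the map $n\mapsto\Psi(k+n)-\Psi(k)$ is additive. If $\Psi(k+n)-\Psi(k)\approx t_n\log|k|$ on a common range of large frequencies (which is what invoking the imaginary-power model requires), then comparing the windows of length $n$ and $2n$ at the same $k$ and letting $k$ grow forces $t_{2n}\approx 2t_n$, i.e.\ $t_n\asymp n$. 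Hence, granting your own two-sided estimate that the $L^p(\TT)$-multiplier norm of $(|k|^{it})_{k\in\Z}$ is $\asymp(1+|t|)^{1/2-1/p}$, the twisted shift can only achieve $\|T^n\|_p\asymp n^{1/2-1/p}$, i.e.\ the single value $\varepsilon=1/p$: the superlinear $t_n$ needed for $\varepsilon<1/p$ and the sublinear $t_n$ needed for $\varepsilon>1/p$ are both incompatible with additivity, so no choice of $\Psi$ meets your specification. Separately, absolute Ces\`aro boundedness is left entirely open: your square-function target $\bigl\|\bigl(\sum_{k<n}|D_{W_k}f|^2\bigr)^{1/2}\bigr\|_p\le C\,n^{1/p}\|f\|_p$ (which would in fact give the stronger $p$-absolute Ces\`aro boundedness) is supported only by the trivial $L^2$ identity, while the bounds you actually possess (Minkowski's inequality plus $\|D_{W_k}\|_{p\to p}\lesssim k^{1/2-\varepsilon}$) only yield $n^{1-\varepsilon}$, far from the required gain.

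For contrast, the paper's proof avoids estimating multiplier norms of growing symbols altogether. By Paley's inequality \cite[Theorem V(8.20)]{Zy}, the closed span $M$ of the lacunary exponentials $\gamma^{2^m}$ in $L^p(\TT,\lambda)$ is isomorphic to $\ell^2$, and by the Marcinkiewicz multiplier theorem the natural projection $Q$ of $L^p$ onto $M$ is bounded. Setting $T=RQ$, where $R$ is the weighted backward shift on $M\cong\ell^2$ with weights $\bigl(\frac{m+1}{m}\bigr)^{1/2-\varepsilon}$, one has $T^n=R^nQ$, so both the growth $\|T^n\|\asymp n^{1/2-\varepsilon}$ and the absolute Ces\`aro boundedness are inherited from the Hilbert-space example of \cite[Theorem 2.1]{BBMP}. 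The lesson is that the norm growth must be generated inside a complemented Hilbertian subspace of $L^p$, where it is an $\ell^2$ computation; generating it through genuine $L^p$ Fourier-multiplier phenomena on all frequencies, as you propose, caps the exponent at $1/2-1/p$.
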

\begin{proof} 
We first define a projection $Q\, :\, L^2(\TT,\lambda)\to L^2(\TT,\lambda)$ as follows. 
For $f=\sum_{n\in \Z} c_n \gamma^n\in L^2(\TT,\lambda)$ set $Qf:=\sum_{m\ge 1}c_{2^m}\gamma ^{2^m}$. 
Notice that $Q$ is the operator obtained by multiplying the Fourier coefficients term by term 
with the sequence $(a_n)_{n\in \Z}$ given by $a_{2^m}=1$ for every $m\ge 1$ and $a_n=0$ otherwise. 
Since the sequence $(a_n)_{n\in \Z}$ has bounded dyadic variation,  by the Marcinkiewicz multiplier
theorem \cite[Theorem XV(4.14)]{Zy} it defines a bounded Fourier multiplier on $L^p(\TT,\lambda)$ 
(i.e. $\|\sum_{n\in \Z} a_nc_n \gamma^n\|_p \le A_p \|\sum_{n\in \Z} c_n \gamma^n\|_p$ for 
$\sum_{n\in \Z} c_n \gamma^n$  in $L^p$). Thus $Q$ extends to  a bounded operator on $L^p(\mathbb T)$, 
and  for every $f\in L^p$  we have, by \cite[Theorem V(8.20)]{Zy}
\begin{equation} \label{PW}
\frac1{C_p}(\sum_{m\ge 1} |c_{2^m}|^2\big)^{1/2}\le \|Qf\|_p \le C_p (\sum_{m\ge 1} |c_{2^m}|^2\big)^{1/2}\, ,
\end{equation}
where $C_p$ depends only on $p$.

By \eqref{PW}, $Q$ actually takes values in 
$M:=\{g=\sum_{m\ge 1} b_m \gamma^{2^m}\, : \, (b_m)_{m\in \N}\in \ell^2(\N)\}$, which is closed
in $L^p$, and is clearly isomorphic to $\ell^2(\N)$. For $g \in M$, \eqref{PW} yields 
that $\|g\|_p\sim \|g\|_2$.

We now define $R : \, M\to M$ as follows. For $g=\sum_{m\ge 1}b_m\gamma^{2^m}\in M$
set $Rg:= \sum_{m\ge 1}d_m\gamma^{2^m}$, where $d_m= \Big(\frac{m+1}m\Big)^{1/2-\varepsilon} b_{m+1}$.
\smallskip

Finally, we set $T:=RQ$. Since $Q$ is a projection of $L^p$ onto $M$ and $R$ takes values in $M$, 
we see that $T^n=R^n Q$ for every $n\ge 1$, so $T$ is absolutely Ces\`aro bounded on $L^p$
whenever $R$ is (on $M$), and the desired estimate on $\|T^n\|$ follows from the same estimate for $R$. 

\smallskip

But, since $M$ and $\ell^2(\N)$ are isomorphic, we see that  $R$ is similar to the weighted 
backward shift on $\ell^2(\N)$ defined in Theorem 2.1 of \cite{BBMP}, so the estimates of
\cite{BBMP} finish the proof. 
\end{proof}

\medskip

\medskip
In view of the above remarks and the known examples of absolutely 
Ces\`aro bounded operators on 
$L^p$ spaces, and in view of the results in the Hilbert case, the following question seems natural. The notion of $p$-absolute Ces\`aro boundedness was defined in \cite{CCEL}; for $p=2$ see also \cite{BBMP}.

\medskip

\noindent {\bf Question.} Let $T$ be an absolutely Ces\`aro bounded operator on an 
$L^p$-space with $1\le p <2$ (resp. with $p> 2$); is $T$  $\ p$-absolutely Ces\`aro bounded 
(resp. $2$-absolutely Ces\`aro bounded)?

\medskip
\medskip

\section{Growth of the powers for Kreiss bounded operators} \label{UMD}

Montes-Rodr\'\i guez et al. \cite{MSZ} and Aleman-Suciu \cite{AS} asked whether any uniformly Kreiss bounded operator $T$ satisfies $\|T^n\|=o(n)$ (hence is mean ergodic whenever $X$ is reflexive). In this section, we prove that every Kreiss bounded operator $T$ on a  UMD space, 
satisfies $\|T^n\|=o(n)$, with even a logarithmic rate.

\medskip

{\bf Definition.}
We say that a Banach space $X$ is {\it UMD (Unconditional Martingale Differences
property)} if for some (every) $p>1$, there exists $C_p>0$ such that for every sequence 
$(d_n)_{1\le n\le N}$ of martingale differences in some $L^p(\Omega,X,\nu)$ 
and every sequence $(\varepsilon_n)_{1\le n\le N}\in \{-1,1\}^N$, we have
$$
\|\sum_{n=1}^N \varepsilon_n d_n\|_{L^p(\Omega,X)}\le C_p \|\sum_{n=1}^Nd_n\|_{L^p(\Omega,X)}\, .
$$

We will refer to the book of Hyt\"onen, van Nerven, Vervaar and Weis \cite{HNVW} for the 
definitions and results about UMD spaces and Fourier multipliers, as well as to the paper of 
Zimmermann \cite{Zi}.

Let us recall some important features of UMD spaces. UMD spaces are reflexive 
\cite[p. 306]{HNVW}, with non-trivial type and finite cotype \cite[p. 313]{HNVW}, 
but the converse is not true (there exist reflexive Banach spaces with non-trivial type 
and finite cotype which are not UMD \cite[p. 311]{HNVW}). Moreover, a UMD space has an
equivalent uniformly convex norm (via super-reflexivity \cite[pp. 308 and 363]{HNVW}),
but the converse is false \cite[p. 354]{HNVW}.
The class of UMD spaces contains all $L^p$-spaces with $1<p<\infty$, and if $X$ is UMD, 
so is $L^p(X)$. Finally, let us mention that $X$ is UMD if and only if $X^*$ is \cite[p. 292]{HNVW}


\medskip

The  class of UMD spaces is the right one to work with Fourier Multipliers. 
In our context, the relevance of UMD spaces is that those spaces are precisely the ones for 
which the Riesz property (see below) holds, see for instance Theorem 5.2.10 page 398 of \cite{HNVW}. In particular the Marcinkiewicz theorem cannot 
hold on non-UMD spaces.

\medskip

{\bf Definition.} We say that $(a_n)_{n\in\Z}\in \C^\Z$ is an {\it $L^p(\T,X)$-Fourier multiplier}
if there exists $C_p>0$,  such that whenever $(c_n)_{n\in \Z} \in X^\Z$ and the series
$\sum_{n\in \Z}\gamma^n c_n$ converges in $L^p(\T,X)$, we have convergence of
$\sum_{n\in \Z}a_n \gamma^n c_n$, and
\begin{equation}\label{def-mult}
\int_\T \|\sum_{n\in \Z}a_n \gamma^n c_n\|^p \, d\gamma \le 
C_p^p \int_\T \|\sum_{n\in \Z} \gamma^n c_n\|^p\, d\gamma\, .
\end{equation}
Then, we denote by $\|(a_n)_{n\in \Z}\|_{\M_p(X)}$ the best constant $C_p$ for which \eqref{def-mult} holds.

\smallskip

\medskip

Set $I_0=\{0\}$ and for every $n \in \N$,  put $I_n=\{2^n,\ldots , 2^{n+1}-1\}$ and 
$I_{-n}=\{1-2^{n+1},\ldots , -2^n\}$. Given a sequence ${\bf a}=(a_n)_{n\in \Z}$ 
of complex numbers and an interval  $I=[\alpha,\beta]$ of integers, we define the 
{\it variation of ${\bf a}$ on $I$} by $V({\bf a},I):= \sum_{k=\alpha}^{\beta-1}|a_{k+1}-a_k|$. 
We define {\it the dyadic variation of ${\bf a}$}  by 
$V_d({\bf a}):=\sup_{n\in \Z}|a_n|+\sup_{n\in \Z}V({\bf a},I_n)$ and we say that ${\bf a}$ has bounded dyadic variation if $V_d({\bf a})<\infty$.

\medskip

In the following multiplier theorems, $X$ is (necessarily) a UMD space.  

\medskip

Zimmermann  \cite{Zi}  proved that any sequence with bounded dyadic variation is an 
$L^p(\T,X)$-Fourier multiplier, thus extending the Marcinkiewicz theorem 
which states the same result with $X=\C$. Moreover, there exists $C_p(X)>0$  such that $ \|{\bf a}\|_{\M_p(X)}\le C_p(X) V_d({\bf a})$.

In particular, for any 
interval  $I\subset \Z$, $(\delta_n(I))_{n\in \Z}$ is an $L^p(\T,X)$-Fourier multiplier and the norm $\|(\delta_n(I))_{n\in \Z}\|_{\M_p(X)}$ is bounded independently of $I$. We will call this result the {\it Riesz theorem}.

Moreover, any bounded monotone sequence of real numbers is an $L^p(\T,X)$-Fourier multiplier. 
We will call that result the {\it Stechkin theorem}.

Finally, any sequence with values in $\{-1,1\}$ that is constant on each dyadic interval is 
an $L^p(\T,X)$-Fourier multiplier. We will call that result the {\it Littlewood-Paley theorem.}

We are now in position to prove the result of this section. 

\begin{theorem} \label{kreiss-umd}
Let $X$ be a UMD Banach space. Let $q$ and $q^*$ be the (finite) cotypes of $X$ and $X^*$ 
respectively, and put $s=\min(q,q^*)$. Let $T$ be a Kreiss bounded operator on $X$. 
Then $\|T^n\|=O(n/(\log n)^{1/s})$; in particular $\|T^n\|=o(n)$.
\end{theorem}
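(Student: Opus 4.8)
The plan is to turn the Kreiss condition into an $L^a(\T,X)$-bound on the $X$-valued trigonometric polynomial whose Fourier coefficients are the powers $T^kx$, and then to run a duality argument against $X^*$ in which $\|T^Nx\|$ is bounded below by the norm of a \emph{single} dyadic block; summing $\approx\log N$ such blocks through cotype will produce the logarithmic denominator. First I would record the basic object: for $r>1$ set $g_r(\gamma)=R(r\gamma,T)x=\sum_{k\ge0}r^{-(k+1)}\gamma^{-(k+1)}T^kx$, so that \eqref{KRC} gives $\|g_r\|_{L^\infty(\T,X)}\le C\|x\|/(r-1)$ and hence the same bound in every $L^a(\T,X)$. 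Two multiplier reductions (legitimate because $X$ is UMD) clean this up: by the Riesz theorem the series may be truncated to the frequencies $k\le N$, after which the now bounded and monotone reciprocal weight $r^{k+1}$ may be removed by the Stechkin theorem, both at the cost of constants independent of $r$ and of the interval. Taking $r=1+1/N$ yields the clean estimate $\big\|\sum_{k=0}^{N}\gamma^{-(k+1)}T^kx\big\|_{L^a(\T,X)}\le C_aN\|x\|$ for every $1<a<\infty$. Since $\|R(\lambda,T^*)\|=\|R(\lambda,T)\|$, the operator $T^*$ is Kreiss bounded with the same constant, so the dual estimate holds in $L^a(\T,X^*)$ as well.

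The heart of the matter is a block estimate with the \emph{correct scaling}. For a dyadic block $k\in[2^n,2^{n+1})$ I would apply the resolvent bound for $T^*$ at the block-adapted radius $r_n=1+2^{-n}$: on that block $(k+1)\log r_n\in[1,2]$, so the weight $r_n^{-(k+1)}$ is bounded above and below by absolute constants, and after restricting by the Riesz theorem and removing the weight by the Stechkin theorem one gets $\big\|\sum_{k=2^n}^{2^{n+1}-1}\gamma^{-(k+1)}T^{*k}x^*\big\|_{L^{q'}(\T,X^*)}\le C\,2^n\|x^*\|$, the norm now being proportional to the \emph{block size} $2^n$ rather than to $N$. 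This is the Kreiss analogue of the block scaling that strong Ces\`aro boundedness supplied in Section 2. The orthogonality relation $\int_\T\langle\gamma^{-(k+1)}T^{*k}x^*,\gamma^{\ell+1}T^{N-\ell}x\rangle\,d\gamma=\delta_{k\ell}\langle x^*,T^Nx\rangle$ then yields $2^n\langle x^*,T^Nx\rangle=\int_\T\big\langle\sum_{k}\gamma^{-(k+1)}T^{*k}x^*,\sum_{\ell}\gamma^{\ell+1}T^{N-\ell}x\big\rangle\,d\gamma$, and H\"older in $L^{q'}(\T,X^*)\times L^{q}(\T,X)$ makes the two factors of $2^n$ cancel. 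Thus for every $n$ with $2^{n+1}\le N$ one obtains the \emph{unweighted} lower bound $\|T^Nx\|\le C\|w_n\|_{L^q(\T,X)}$, where $w_n:=\sum_{\ell=2^n}^{2^{n+1}-1}\gamma^{\ell+1}T^{N-\ell}x$ is the $n$-th dyadic block of $W(\gamma):=\sum_{\ell=0}^N\gamma^{\ell+1}T^{N-\ell}x$.

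It then remains to sum the $L\approx\log_2N$ blocks. Since each satisfies $\|T^Nx\|^q\le C^q\|w_n\|_{L^q(\T,X)}^q$, I would combine the Littlewood-Paley theorem, which gives $\E_\varepsilon\big\|\sum_n\varepsilon_nw_n\big\|_{L^q}^q\le C_p^q\|W\|_{L^q}^q$, with the cotype $q$ of $L^q(\T,X)$, which dominates $\sum_n\|w_n\|_{L^q}^q$ by that randomized sum. Since, after reindexing $k=N-\ell$ and a rotation, $W$ is exactly the truncated polynomial of the first step, $\|W\|_{L^q(\T,X)}\le CN\|x\|$. Chaining these estimates gives $L\,\|T^Nx\|^q\le C'(N\|x\|)^q$, that is $\|T^N\|=O(N/(\log N)^{1/q})$.

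Finally, running the entire argument with the roles of $X$ and $X^*$ interchanged — placing the heavy factor in $X^*$ and invoking its cotype $q^*$ — gives $\|T^{*N}\|=O(N/(\log N)^{1/q^*})$, and since $\|T^N\|=\|T^{*N}\|$ one keeps the better exponent, yielding the stated $\|T^n\|=O(n/(\log n)^{1/s})$ with $s=\min(q,q^*)$. I expect the block-scaling step to be the main obstacle: one must choose the radius $r_n$ adapted to each dyadic block so that the resolvent bound scales like the block size (it is this cancellation that removes the $n$-dependent weight and lets all $\log N$ blocks contribute comparably), and one must check that the Riesz, Stechkin and Littlewood-Paley manipulations all carry constants uniform in the blocks on the UMD space $X$. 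Once these are in place, the remainder is bookkeeping.
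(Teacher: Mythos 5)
Your proposal is correct and follows essentially the same route as the paper's proof: the same Riesz--Stechkin multiplier reductions of the Kreiss resolvent condition to an unweighted truncated polynomial bound, the same orthogonality/H\"older block estimate pairing against $T^*$, the same Littlewood--Paley plus cotype summation over $\approx\log N$ dyadic blocks, and the same duality step to obtain $s=\min(q,q^*)$. The only cosmetic difference is that you derive the block estimate via a block-adapted radius $r_n=1+2^{-n}$, whereas the paper simply reuses its uniform truncated estimate \eqref{est-UMD} with $N$ replaced by the block endpoint $2^{n+1}$ --- the same computation in disguise.
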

\begin{proof} Assume the theorem is proved when $s=q$. When $s=q^*$,  we apply the result
to $T^*$ on $X^*$, noting that $T^*$ is also Kreiss bounded, and  by reflexivity $X^{**} = X$;
we obtain that $\|T^{*n}\|=O(n/(\log n)^{1/q^*})$, and use $\|T^n\|=\|T^{*n}\|$. 

Hence, we just have to prove the case where $s=q$. 
\smallskip

 By assumption, for every $r>1$ and every $\gamma \in \T$, we have for every $x\in X$ with 
$\|x\|=1$,
$$
\|\sum_{n\ge 0} \frac{\gamma^n T^nx}{r^{n+1}}\| = \|R(\bar\gamma r,T)\|  \le \frac{C}{(r-1)}\, .
$$

Let $p>1$. Let $N\in \N$ and take $r=1+1/N$. By the Riesz theorem (in $L^p(\T,X)$) there exists $C_p>0$ 
such that
\begin{gather*}
 \int_{\T} \|\sum_{n= 0}^{N-1} \frac{\gamma^nT^nx}{(1+1/N)^{n+1}}\|^p d\gamma \le  
C_p^p \int_{\T} \|\sum_{n\ge 0} \frac{\gamma^nT^nx}{(1+1/N)^{n+1}}\|^p \, d\gamma
\le (C_pC)^pN^p\, .
\end{gather*}

Define a sequence $(a_n)_{n\in \Z}$ as follows: $a_n=1+1/N$, if $n\le 0$, $a_n=(1+1/N)^{n+1}$ if $1\le n\le N$ and, $a_n=(1+1/N)^{N+1}$ if $n\ge N+1$. 
Then, $(a_n)_{n\in \Z}$ is bounded and monotone, hence, by the Stechkin theorem,  there exists $C_p>0$ such that 

\begin{equation*}
\int_{\TT} \Big\|\sum_{n=0}^{N-1} \gamma^n T^nx\Big\|^p  d\gamma  \le C_p^p \int_{\T} \|\sum_{n= 0}^{N-1} \frac{\gamma^nT^nx}{(1+1/N)^{n+1}}\|^p d\gamma \le C' N^p\, .
\end{equation*}

Using the Riesz theorem  again, for every $0\le M\le N-1$, we have 
\begin{equation}\label{est-UMD}
\int_{\T}\Big\|\sum_{n=M}^{N-1} \gamma^n T^nx\Big\|^p \,  d\gamma\le C'' N^p\, .
\end{equation}

Let $x^*\in X^*$ with $\|x^*\|=1$. Let $0\le P<Q\le N$ be integers. We have, writing $q':=q/(q-1)$ and using orthogonality, 

\begin{gather*}
(Q-P)|\langle x^*,T^Nx\rangle |= 
\Big|\int_{\T} \sum_{k=P}^{Q-1} \sum_{\ell=P}^{Q-1} 
\langle \bar \gamma^k T^{*k}x^*, \gamma^\ell T^{N-\ell}x\rangle d\gamma \Big|\\ 
\le \int_{\T} \Big\|\sum_{k=P}^{Q-1}\bar \gamma^kT^{*k}x^*\Big\|\,  \Big\|\sum_{\ell=P}^{Q-1} \gamma^{\ell} T^{N-\ell}x\Big\| \, d\gamma \\ \le \Big(  \int_{\T} \Big\|\sum_{k=P}^{Q-1} \bar \gamma^kT^{*k}x^*\Big\|^{q'}\, d\gamma\Big)^{1/q'}\, \Big(   \int_{|\T}  \Big\|\sum_{\ell=P}^{Q-1} \gamma^{ \ell} T^{N-\ell}x\Big\|^q \, d\gamma\Big)^{1/q}
\end{gather*}

Since, $T^*$ is also Kreiss bounded on $X^*$ (which is also UMD), \eqref{est-UMD} holds for $T^*$ with $p=q'$ and $N=Q$. Hence, taking the supremum over 
$\{\|x^*\|=1\}$, we see that 

\begin{equation}\label{est2-UMD}
 \int_{\T}  \Big\|\sum_{\ell=P}^{Q-1}\gamma^{\ell } T^{N-\ell}x\Big\|^q \, d\gamma\ge C\frac{(Q-P)^q}{Q^q}
\|T^Nx\|^q\, .
\end{equation}

\medskip

Let $N\in \N$. Let  $L:=\log (N/2)/\log 2$. Let $(\varepsilon_n)_{n\in \N}$ be Rademacher variables on some probability space $(\Omega,\F,\P)$. Using the Littlewood-Paley theorem, there exists $C_q>0$ such that 

\begin{gather*}
\int_{\T}\Big\|\sum_{n=N+1-2^L}^{N-1}\gamma^n T^n x\Big\|^q\, d\gamma  = \int_{\T}\Big\|\sum_{n=1}^{2^L-1}\gamma^n 
T^{N-n} x\Big\|^qd\gamma \\ \ge C_q \int_{\T}\Big\| \sum_{k=0}^{L-1} \varepsilon_k \sum_{\ell=2^k}^{2^{k+1}-1}\gamma^\ell T^{N-\ell}x\Big\|^q\, 
d\gamma  \, .
\end{gather*}

Using \eqref{LT-cotype}, we have
$$
\int_\Omega \Big\| \sum_{k=0}^{L-1} \varepsilon_k \sum_{\ell=2^k}^{2^{k+1}-1}\gamma^\ell T^{N-\ell}x\Big\|^qd\P \, \ge 
\sum_{k=0}^{L-1}\| \sum_{\ell=2^k}^{2^{k+1}-1}\gamma^\ell T^{N-\ell}x\Big\|^q
$$
Hence, 
$$
\int_{\T}\Big\|\sum_{n=N+1-2^L}^{N-1}\gamma^n T^n x\Big\|^q\, d\gamma \ge C_q\sum_{k=0}^{L-1}\int_\T \| \sum_{\ell=2^k}^{2^{k+1}-1}\gamma^\ell T^{N-\ell}x\Big\|^q\, d\gamma\, .
$$
In particular, using \eqref{est-UMD} and \eqref{est2-UMD}, we obtain that 
$$
C_qN^q \ge L\|T^Nx\|^q\, ,
$$
and the result follows.     
\end{proof}


\begin{corollary} \label{kreiss-Lp}
Let $T$ be a Kreiss bounded operator on $L^p(\Omega,\mu)$, $1 < p< \infty$. Then 
$\|T^n\|=O(n/\sqrt{\log n})$.
\end{corollary}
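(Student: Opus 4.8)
The plan is to read this off directly from Theorem \ref{kreiss-umd}, so essentially all the work is in identifying the relevant cotypes. First I would recall, as noted at the start of this section, that $L^p(\Omega,\mu)$ with $1<p<\infty$ is a UMD space, so the hypotheses of the theorem are met.

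Next I would compute the two cotypes entering the statement of Theorem \ref{kreiss-umd}. By the standard fact recalled before Corollary \ref{cor-Lp}, the space $X=L^p(\Omega,\mu)$ has finite cotype $q=\max(p,2)$. Its dual is $X^*=L^{p'}(\Omega,\mu)$ with $p'=p/(p-1)$, which likewise has cotype $q^*=\max(p',2)$.

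The key observation is then that, since $p$ and $p'$ are conjugate exponents, exactly one of them lies in $(1,2]$ and the other in $[2,\infty)$; consequently one of $q,q^*$ equals $2$. Hence $s=\min(q,q^*)=2$, whether $p\le 2$ or $p\ge 2$. Substituting $s=2$ into the conclusion of Theorem \ref{kreiss-umd} gives $\|T^n\|=O(n/(\log n)^{1/2})=O(n/\sqrt{\log n})$, as asserted.

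I do not expect a genuine obstacle here: the statement is a clean specialization of the preceding theorem, and the only point requiring a moment's care is the cotype of the dual space together with the elementary remark that conjugacy forces the minimum of the two cotypes to be exactly $2$.
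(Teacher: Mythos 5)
Your proposal is correct and follows exactly the paper's route: the paper's own proof is the one-line observation that for $1<p<\infty$ the quantity $s=\min(q,q^*)$ equals $2$, followed by an appeal to Theorem \ref{kreiss-umd}. You have merely spelled out the cotype computation ($q=\max(p,2)$, $q^*=\max(p',2)$, and conjugacy of $p,p'$ forcing $\min(q,q^*)=2$) that the paper leaves implicit in its citation.
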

\begin{proof} For $1<p< \infty$, \cite[p. 154]{AK} yields $s=2$.
\end{proof}

\begin{remark}The corollary extends the result proved for Hilbert spaces in 
\cite[Theorem 4.1]{CCEL} and in \cite{BM}.
\end{remark}

\begin{corollary}
Let $T$ be a uniformly Kreiss bounded operator on a UMD space. Then $\gamma T$
is mean ergodic for every $\gamma \in \mathbb T$.
\end{corollary}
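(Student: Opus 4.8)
The plan is to reduce the corollary to a mean ergodic theorem under a growth condition. Fix $\gamma\in\T$ and write $S=\gamma T$. First I would observe that $S$ is again uniformly Kreiss bounded: substituting $\mu=\lambda/\gamma$ in \eqref{UKR} and using $|\mu|=|\lambda|$ together with $|\gamma|=1$ gives
\[
\Big\|\sum_{k=0}^n\frac{S^k}{\lambda^{k+1}}\Big\|=\Big\|\sum_{k=0}^n\frac{T^k}{\mu^{k+1}}\Big\|\le\frac{C}{|\lambda|-1},
\]
so $S$ satisfies \eqref{UKR} with the same constant. In particular $S$ is Kreiss bounded, and since $X$ is UMD, Theorem \ref{kreiss-umd} applies to $S$ and yields $\|S^n\|=o(n)$; hence $\|S^nx\|/n\to 0$ for every $x\in X$.

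Next I would show that $S$ is Cesàro bounded, i.e. $\sup_N\|\frac1N\sum_{k=0}^{N-1}S^k\|<\infty$. This is the key point, since it cannot be deduced from $\|S^n\|=o(n)$ alone (the Cesàro averages of $\|S^k\|$ may still diverge), and must be extracted from uniform Kreiss boundedness directly. Taking $\lambda=1+1/N$ in \eqref{UKR} and setting $V_n:=\sum_{k=0}^n\lambda^{-k}S^k$ one gets $\|V_n\|\le 2CN$ for all $n$. Writing $S^k=\lambda^k(V_k-V_{k-1})$ with $V_{-1}=0$ and summing by parts gives
\[
\sum_{k=0}^{N-1}S^k=\lambda^{N-1}V_{N-1}-\frac1N\sum_{k=0}^{N-2}\lambda^kV_k,
\]
and since $\lambda^k\le(1+1/N)^N<\e$ for $0\le k\le N-1$, the right-hand side has norm $O(N)$ uniformly in $N$. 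Thus $S$ is Cesàro bounded.

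Finally I would apply the classical mean ergodic theorem in its growth-condition form: on a reflexive Banach space, a Cesàro bounded operator $S$ with $\|S^nx\|/n\to 0$ for all $x$ admits the decomposition $X=\ker(I-S)\oplus\overline{(I-S)X}$, and $\frac1N\sum_{k=0}^{N-1}S^k$ converges strongly to the projection onto $\ker(I-S)$ along $\overline{(I-S)X}$. UMD spaces are reflexive, so the theorem applies. Concretely, the decomposition is obtained by fixing $x$, extracting a weak limit $y$ of the bounded sequence $(\frac1N\sum_{k=0}^{N-1}S^kx)_N$, and noting that the telescoping identity $(I-S)\frac1N\sum_{k=0}^{N-1}S^k=\frac1N(I-S^N)$ forces $y\in\ker(I-S)$, while $x-\frac1N\sum_{k=0}^{N-1}S^kx\in(I-S)X$ forces $x-y\in\overline{(I-S)X}$; Cesàro boundedness then gives strong convergence on each summand. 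Hence $S=\gamma T$ is mean ergodic, and since $\gamma\in\T$ was arbitrary the corollary follows.

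The main obstacle is the middle step: the $o(n)$ estimate of Theorem \ref{kreiss-umd} does not by itself control the Cesàro averages, so one genuinely needs the summation-by-parts argument to pass from uniform Kreiss boundedness to Cesàro boundedness; once both the boundedness of the averages and $\|S^n\|/n\to 0$ are in hand, the mean ergodic theorem is standard.
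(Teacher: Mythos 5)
Your proof is correct and takes essentially the same route as the paper: deduce Ces\`aro boundedness of $\gamma T$ from uniform Kreiss boundedness, obtain $\|(\gamma T)^n\|=o(n)$ from Theorem \ref{kreiss-umd}, and conclude by the classical mean ergodic theorem on reflexive spaces (UMD spaces being reflexive). The only differences are presentational: where the paper simply cites \cite{MSZ} for Ces\`aro boundedness and uses $\|(\gamma T)^n\|=\|T^n\|$ directly, you prove the Ces\`aro bound yourself by Abel summation and spell out the mean ergodic theorem, both correctly.
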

\begin{proof} 
 By uniform Kreiss boundedness $\gamma T$ is Ces\`aro bounded (see 
 \cite{MSZ}),
and $\frac1n \|(\gamma T)^n\| = \frac1n \|T^n\| \to 0$ by Theorem \ref{kreiss-umd}.
\end{proof}
\bigskip

If we strengthen the Kreiss boundedness to strong Ces\`aro boundedness, we may drop the 
assumption that $X$ be UMD,  assuming only finite cotype for $X$ or $X^*$. 
Recall that there exist Banach spaces with finite cotype that are not UMD and even not reflexive; 
for instance, any $L^1$ space has cotype 2 \cite[p. 154]{AK}.

\begin{proposition}\label{prop-acb}
Let $T$ be a strongly Ces\`aro bounded operator on a Banach space $X$.  Let $q$ and $q^*$ be the cotypes of $X$ and $X^*$ 
respectively, and put $s=\min(q,q^*)$. Then $\|T^n\|=O(n/(\log n)^{1/s})$; in particular, if $s$ is finite, $\|T^n\|=o(n)$.
\end{proposition}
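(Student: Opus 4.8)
The plan is to transcribe the finite-cotype half of the proof of the Section~2 theorem, observing that every estimate used there for absolutely Ces\`aro bounded operators in fact needed only strong Ces\`aro boundedness, and that the type-$p$ upper bound \eqref{pq-bounded1} can be replaced by the type-free bound which strong Ces\`aro boundedness supplies directly. First I would reduce to the case $s=q$, where $q$ is the cotype of $X$. For the case $s=q^*$ I would run the $s=q$ argument on $T^*$ acting on $X^*$: by \cite[Corollary 3.7]{CCEL} the operator $T^*$ is strongly Ces\`aro bounded, $X^*$ has cotype $q^*$, and $T^{**}$ (whose strong Ces\`aro boundedness the argument also uses) is strongly Ces\`aro bounded by applying \cite[Corollary 3.7]{CCEL} once more. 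Since $\|T^n\|=\|T^{*n}\|$ for any bounded operator, the resulting bound $\|T^{*n}\|=O(n/(\log n)^{1/q^*})$ transfers back to $T$. In contrast with Theorem~\ref{kreiss-umd}, this duality step requires no reflexivity, because the whole $s=q$ argument is carried out internally on a fixed space and the passage to $X^*$ never forces us to identify $X^{**}$ with $X$; this is precisely what lets the hypothesis weaken to finite cotype of $X$ \emph{or} of $X^*$.

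Next I would set up the two quantitative ingredients. For the upper bound, \eqref{SCB2} applied with the unimodular scalars $\gamma_k$ chosen to be Rademacher signs $\varepsilon_k$ gives $\frac1N\|\sum_{k=0}^{N-1}\varepsilon_k T^kx\|\le C\|x\|$ pointwise on the Rademacher probability space, hence $\E(\|\sum_{k=0}^{N-1}\varepsilon_k T^kx\|)\le CN\|x\|$, and Kahane--Khintchine upgrades this to $\E(\|\sum_{k=0}^{N-1}\varepsilon_k T^kx\|^q)^{1/q}\le C'N\|x\|$; the block version for $T^*$ in $L^r$ with $r=q/(q-1)$ follows the same way from \cite[Proposition 3.6]{CCEL}. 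Because no type hypothesis is available, the exponent of $N$ here is $1$ rather than $1/p$, which is exactly what turns the final estimate into $O(n/(\log n)^{1/s})$ instead of a sharper type-sensitive one. For the lower bound I would reproduce \eqref{claim-BBMP1}: Rademacher orthogonality isolates the diagonal contribution $\sum_{k=P}^{Q-1}\langle x^*,T^Nx\rangle$, and H\"older with exponents $r$ and $q$ together with the $T^*$ upper bound yields $\frac{(Q-P)^q}{Q^q}\|T^Nx\|^q\le C^q\,\E(\|\sum_{\ell=P}^{Q-1}\varepsilon_\ell T^{N-\ell}x\|^q)$ for $0\le P<Q\le N$. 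This derivation uses only strong Ces\`aro boundedness of $T$ and of $T^*$, and no cotype.

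Putting $L:=\log(N/2)/\log 2$, I would then specialise \eqref{claim-BBMP1} to the dyadic blocks $P=2^\ell$, $Q=2^{\ell+1}$ with $0\le\ell\le L$, obtaining as in \eqref{block} that $\E(\|z_\ell\|^q)\ge c\|T^Nx\|^q$ for the independent, centered variables $z_\ell:=\sum_{k=N+1-2^{\ell+1}}^{N-2^\ell}\varepsilon_k T^kx$. The cotype inequality \eqref{LT-cotype}, applied to these $z_\ell$, gives $\sum_{\ell=0}^L\E(\|z_\ell\|^q)\le K\,\E(\|\sum_{\ell=0}^L z_\ell\|^q)$, and since $\sum_\ell z_\ell$ is a sub-sum of $\sum_{k=0}^{N-1}\varepsilon_k T^kx$, item~$(ii)$ of Proposition~\ref{contraction-principle} bounds the right-hand side by $\E(\|\sum_{k=0}^{N-1}\varepsilon_k T^kx\|^q)$. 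Combining with the two bounds above yields $(L+1)\,c\,\|T^Nx\|^q\le K(C'N\|x\|)^q$, that is $\|T^N\|=O(N/(\log N)^{1/q})$, which settles the case $s=q$.

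I expect the only genuinely delicate point to be the duality reduction: one must check that weakening the Kreiss/UMD setting to strong Ces\`aro boundedness does not secretly reintroduce a reflexivity requirement. It does not, precisely because the $s=q$ case is self-contained on a single space — it relies on \eqref{SCB2}, \eqref{claim-BBMP1} and \eqref{LT-cotype}, all valid on any Banach space of the stated cotype — while the transfer to $X^*$ is effected purely through $\|T^n\|=\|T^{*n}\|$ and the stability of strong Ces\`aro boundedness under taking adjoints. Everything else is a faithful transcription of the finite-cotype computation already performed in Section~2, with the type-$p$ input \eqref{pq-bounded1} replaced by the $p=1$ bound coming directly from \eqref{SCB2}.
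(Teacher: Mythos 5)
Your proposal is correct and follows essentially the same route as the paper's own proof: reduce to $s=q$ via adjoint-stability of strong Ces\`aro boundedness (no reflexivity needed, since the transfer uses only $\|T^n\|=\|T^{*n}\|$), derive the $O(N)$ Rademacher bound from \eqref{SCB2}, obtain the lower bound \eqref{second-est} by Rademacher orthogonality and H\"older against $T^*$, and combine over dyadic blocks via \eqref{LT-cotype} and the contraction principle. The only (immaterial) differences are that the paper invokes \eqref{SCB2} directly for the duality step where you cite \cite[Corollary 3.7]{CCEL}, and that it folds the contraction-principle step into the block estimate \eqref{first-est} rather than applying it after the cotype inequality.
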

\begin{remark} 1. Of course the proposition is relevant only  if $s<\infty$. A positive 
Ces\`aro bounded operator on a Banach lattice is strongly Ces\`aro bounded 
\cite[Proposition 5.13]{CCEL}. Our result seems to be also new for positive operators. 

2. Assume that $X$ has type $p>1$, hence $X$ has cotype $p/(p-1)$. Then, as already mentionned 
$X^*$ has non trivial type, say $p^*>1$ and $X^{**}$ has cotype 
$p^*/(p^*-1)$. Now, if $T$ is strongly Ces\`aro bounded on $X$ so is 
$T^*$ on $X^*$ and the proposition gives $\|T^n\|=\|(T^*)^n\|=
O(n/(\log n)^{1/s})$ 
with $s:=\min(p/(p-1),p^*/(p^*-1)$.
\end{remark}

\begin{proof}
It follows from \eqref{SCB2} that $T$ is strongly Ces\`aro bouded if and only if $T^*$ is. 
Hence, as in the previous proof we may and DO assume that $s=q$. 
Let $(\varepsilon_n)_{n\in \N}$ be Rademacher variables.  By \eqref{SCB2} and the contraction principle, for every $p\in [1,\infty)$, there exists $C_p>0$ such that, for every $0\le M\le N-1$ and every $x\in X$, we have 
\begin{equation}\label{first-est}
\E (\|\sum_{n=M}^{N-1} \varepsilon_n T^n x\|^p) \le C_p N^p\|x\|^p\, .
\end{equation}
We have a similar estimate for $T^*$.

Assume that $s=q<\infty$ otherwise there is nothing to prove and set $q':=q/(q-1)$.  Notice that   for every $(x,x^*)\in X\times X^*$ and every 
$0 \le P<Q\le N$, we have 
$$
(Q-P)|\langle x^*, T^Nx\rangle |=\Big| \E\, \Big(\sum_{k=P}^{Q-1}\sum_{\ell =P}^{Q-1} \langle \varepsilon_kT^{*k}, \varepsilon_\ell T^{N-\ell}\rangle \Big)\Big|\, .
$$ 
Proceeding as in the previous proof, in particular taking supremum over $\{x^*\, : \, \|x^*\|=1\}$, we infer that for every $0\le P<Q\le N$ and every $x\in X$, 
\begin{equation}\label{second-est}
\E \Big(\Big\| \sum_{\ell=P}^{Q-1} \varepsilon_\ell T^{N-\ell}x\Big\|^q\Big)\ge C\frac{(Q-P)^q}{Q^q} \|T^Nx\|^q\, .
\end{equation}

\medskip

 Setting $L:= \log(N/2)\log 2$ and using \eqref{LT-cotype}, we infer that 
$$
\E\Big(\Big\|\sum_{n=N+1-2^L}^{N-1} \varepsilon_n T^n x\Big\|^q\Big) \ge C_q \sum_{k=0}^{L-1}\E\Big(\Big\|\sum_{\ell =2^k}^{2^{k+1}-1}
\varepsilon_\ell T^{N-\ell}\Big\|^q\Big)\, .
$$
Then the result follows by applying \eqref{first-est} with $p=q$ and $M=N+1-2^L$, combined 
with \eqref{second-est} (as in the previous proof).
\end{proof}

\begin{corollary}
Let $T$ be a strongly Ces\`aro bounded operator on $L^1(\Omega,\mu)$. Then 
$\|T^n\|=O(n/\sqrt{\log n})$.
\end{corollary}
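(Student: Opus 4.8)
The plan is to invoke Proposition \ref{prop-acb} directly, so the only real work is to identify the value of $s=\min(q,q^*)$ when $X=L^1(\Omega,\mu)$. Since $T$ is strongly Ces\`aro bounded by hypothesis, the proposition applies verbatim and yields $\|T^n\|=O(n/(\log n)^{1/s})$; it therefore suffices to show that $s=2$.

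First I would recall, following \cite[p.\ 154]{AK}, that $L^1(\Omega,\mu)$ has cotype $\max(1,2)=2$, so that $q=2$. Next, I would observe that by the very definition of cotype every finite cotype is at least $2$, so the cotype $q^*$ of the dual space $X^*=L^\infty$ satisfies $q^*\ge 2$ (in fact $L^\infty$ has trivial cotype, i.e.\ $q^*=\infty$, but this precise value will turn out to be irrelevant). Consequently $s=\min(q,q^*)=\min(2,q^*)=2$.

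Substituting $s=2$ into the conclusion of Proposition \ref{prop-acb} gives $\|T^n\|=O(n/(\log n)^{1/2})=O(n/\sqrt{\log n})$, as claimed. I do not expect any genuine obstacle here: the estimate is a direct special case of the proposition, exactly parallel to Corollary \ref{kreiss-Lp}. The only point worth flagging is that one need not compute the (trivial) cotype of $L^\infty$ at all, since the universal lower bound $q^*\ge 2$ already forces the minimum defining $s$ to be attained at $q=2$; this is what makes the $L^1$ case, despite the non-reflexivity of the space and the trivial cotype of its dual, yield the same exponent $1/2$ as the reflexive $L^p$ case treated in Corollary \ref{kreiss-Lp}.
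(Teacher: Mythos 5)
Your proof is correct and follows exactly the route the paper intends: the corollary is a direct application of Proposition \ref{prop-acb} using the fact that $L^1(\Omega,\mu)$ has cotype $2$ (so $q=2$, and since any cotype is $\ge 2$ by definition, $s=\min(q,q^*)=2$). Your observation that the precise (trivial) cotype of $L^\infty$ is irrelevant is a nice touch, but the argument is otherwise the same as the paper's.
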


\begin{remark} 1. The corollary improves the case $p=1$ of Corollary \ref{cor-Lp}, which yields 
the same result under the stronger assumption of absolute Ces\`aro boundedness. However, 
under this stronger assumption \cite[Proposition 3.1]{CCEL}  yields the better estimate 
$\|T^n\|=O(n^{1-\epsilon})$.

2. The corollary shows that the operator of Kosek \cite{Ko} is not strongly Ces\`aro bounded.
\end{remark}

\medskip

\begin{corollary}
Let $T$ be a positive Ces\`aro bounded operator on a Banach lattice with finite cotype $q$. Then, 
$\|T^n\|=O(n/( \log n)^{1/q})$. If $X=L^p(\Omega,\mu)$, with $1\le p<\infty$, then $\|T^n \|=O(n/\sqrt{\log n})$.
\end{corollary}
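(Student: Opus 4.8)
The plan is to derive this corollary directly from Proposition~\ref{prop-acb}, after first reducing positivity plus Ces\`aro boundedness to strong Ces\`aro boundedness. The opening move is to invoke \cite[Proposition 5.13]{CCEL}, which states that a positive Ces\`aro bounded operator on a Banach lattice is strongly Ces\`aro bounded. This places $T$ squarely within the hypotheses of Proposition~\ref{prop-acb}, so essentially all the analytic work is already done.

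With $q$ the (finite) cotype of $X$, $q^*$ the cotype of $X^*$, and $s=\min(q,q^*)$, Proposition~\ref{prop-acb} immediately yields $\|T^n\|=O(n/(\log n)^{1/s})$; note $s\le q<\infty$ since $q$ is assumed finite. Because $s\le q$ we have $1/s\ge 1/q$, so $(\log n)^{1/s}\ge (\log n)^{1/q}$ for $n\ge 3$, and therefore $n/(\log n)^{1/s}\le n/(\log n)^{1/q}$. This gives the first assertion $\|T^n\|=O(n/(\log n)^{1/q})$, and it is worth stressing that no information about $X^*$ is actually needed: the bound is extracted from the cotype of $X$ alone, even when the cotype of $X^*$ is more favourable.

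For the case $X=L^p(\Omega,\mu)$, $1\le p<\infty$, I would simply compute $s$, using that $L^p$ is a Banach lattice and that, by \cite[p.~154]{AK}, $L^p$ has cotype $\max(p,2)$. For $1<p<\infty$ the dual is $L^{p'}$ with $1/p+1/p'=1$, of cotype $\max(p',2)$; since both $\max(p,2)$ and $\max(p',2)$ are at least $2$ and at least one of them equals $2$ (as $\min(p,p')\le 2$), one gets $s=\min(\max(p,2),\max(p',2))=2$. For $p=1$ the space $L^1$ has cotype $q=2$ while the dual $L^\infty$ has trivial cotype, $q^*=\infty$, so again $s=\min(2,\infty)=2$. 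In every case $s=2$, and Proposition~\ref{prop-acb} delivers $\|T^n\|=O(n/(\log n)^{1/2})=O(n/\sqrt{\log n})$.

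There is no genuine obstacle here; the statement is a repackaging of Proposition~\ref{prop-acb} together with the cited implication \emph{positive Ces\`aro bounded $\Rightarrow$ strongly Ces\`aro bounded}. The only point that deserves a moment's care is the endpoint $p=1$: there $X^*=L^\infty$ contributes nothing, and the bound must come from the cotype $2$ of $L^1$ by itself. This is precisely the regime for which Proposition~\ref{prop-acb} is built, since $s=\min(q,q^*)$ stays finite as long as one of the two spaces has finite cotype, and here $q=2$ suffices.
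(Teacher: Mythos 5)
Your proof is correct and follows essentially the same route as the paper: reduce positivity plus Ces\`aro boundedness to strong Ces\`aro boundedness via \cite[Proposition 5.13]{CCEL}, then apply Proposition \ref{prop-acb}, noting $s\le q$ for the first claim and $s=2$ for $L^p$. The only organizational difference is in the case $p>2$: the paper passes explicitly to $T^*$, which is positive and Ces\`aro bounded on $L^{p'}$ with cotype $2$, whereas you read off $s=\min(q,q^*)=2$ directly from the statement of Proposition \ref{prop-acb} --- the same duality argument, merely invoked inside the proposition rather than at the level of the corollary.
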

\noindent {\bf Proof.} By \cite[Prop. 5.13]{CCEL}, $T$ is strongly Ces\`aro bounded and we apply Proposition \ref{prop-acb}. When $X=L^p$ and $p\le 2$ we use the fact that $X$ has cotype $2$. When $p>2$, we apply the previous case to $T^*$, which is positive and Ces\`aro bounded.  \hfill $\square$

\begin{remark} As far as we know the corollary is new. The only result we are aware of in this direction is due to Emilion \cite{Emilion} and says that 
a positive Ces\`aro bounded operator on a reflexive Banach lattice $X$  satisfies $\|T^nx\|=o(n)$ for every $x\in X$. 
\end{remark}

\begin{corollary}
Let $T$ be a strongly Ces\`aro bounded operator on a reflexive Banach space $X$ such that $X$ or $X^*$ has  finite cotype. 
Then $\gamma T$ is mean ergodic for every $\gamma \in \mathbb T$.
\end{corollary}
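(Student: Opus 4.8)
The plan is to reduce to the classical mean ergodic theorem, exactly as in the proof of the corollary for uniformly Kreiss bounded operators on UMD spaces. Fix $\gamma\in\T$ and set $S:=\gamma T$. I want to verify the three hypotheses that guarantee mean ergodicity of $S$ on the reflexive space $X$: Ces\`aro boundedness of $S$, the growth condition $\frac1n\|S^n\|\to 0$, and reflexivity (which is assumed).

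First I would check that $S=\gamma T$ is Ces\`aro bounded. This is where strong Ces\`aro boundedness is used in its equivalent form \eqref{SCB2}: choosing $\gamma_k=\gamma^k$ (which all have modulus $1$) gives
\[
\frac1n\Big\|\sum_{k=0}^{n-1}S^kx\Big\|=\frac1n\Big\|\sum_{k=0}^{n-1}\gamma^kT^kx\Big\|\le C\|x\|
\]
for all $n\ge 1$ and $x\in X$, so $\sup_n\big\|\frac1n\sum_{k=0}^{n-1}S^k\big\|\le C<\infty$.

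Next I would establish $\frac1n\|S^n\|\to 0$. Since $\|S^n\|=\|\gamma^nT^n\|=\|T^n\|$ and, by hypothesis, $X$ or $X^*$ has finite cotype, the quantity $s=\min(q,q^*)$ appearing in Proposition \ref{prop-acb} is finite; that proposition then gives $\|T^n\|=O(n/(\log n)^{1/s})=o(n)$, whence $\frac1n\|S^n\|\to 0$, and a fortiori $\frac1n\|S^nx\|\to 0$ for every $x\in X$. Finally I would invoke the mean ergodic theorem: a Ces\`aro bounded operator $S$ with $\frac1n\|S^nx\|\to 0$ for every $x$ is mean ergodic as soon as the bounded sequence of averages $A_nx=\frac1n\sum_{k=0}^{n-1}S^kx$ admits weak cluster points, which on the reflexive space $X$ is automatic. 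Concretely, Ces\`aro boundedness and $\frac{S^n}{n}\to 0$ force the splitting $X=\ker(I-S)\oplus\overline{(I-S)X}$ once weak subsequential limits of $(A_nx)$ exist; on $\overline{(I-S)X}$ the averages tend to $0$ while on $\ker(I-S)$ they are constant, and reflexivity supplies the required weak compactness. This yields mean ergodicity of $S=\gamma T$.

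The only genuinely delicate point is the last step: one must use the version of the mean ergodic theorem that requires merely Ces\`aro boundedness together with $\frac{S^n}{n}\to 0$ (rather than full power boundedness), the reflexivity being precisely what converts boundedness of $(A_nx)$ into a weak cluster point from which convergence of the averages follows. Everything else is a direct substitution into the previously established results, so the proof is short.
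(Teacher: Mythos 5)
Your proof is correct and follows exactly the route the paper intends: the paper leaves this corollary without a written proof precisely because it mirrors the earlier UMD corollary, namely Ces\`aro boundedness of $\gamma T$ (which you correctly extract from \eqref{SCB2} with $\gamma_k=\gamma^k$), the decay $\frac1n\|T^n\|\to 0$ from Proposition \ref{prop-acb} (finite $s$ being guaranteed by the hypothesis on $X$ or $X^*$), and the classical mean ergodic theorem on reflexive spaces for Ces\`aro bounded operators with $\frac1n\|S^nx\|\to 0$. Nothing is missing; your spelled-out final step (weak cluster points of the averages plus the splitting $X=\ker(I-S)\oplus\overline{(I-S)X}$) is the standard argument the paper takes for granted.
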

\begin{remark}
 If $X$ is uniformly convexifiable norm, then it is reflexive with finite cotype and the corollary applies to any strongly Ces\`aro bounded operator on $X$.
 \end{remark}

\medskip

In view of the above results the following question seems natural.

{\bf Question.} Is every strongly Ces\`aro bounded operator on a 
reflexive Banach space mean ergodic ? 

\medskip

Notice that we even do not know whether a strongly Ces\`aro bounded operator on a reflexive Banach space is \emph{weakly} mean ergodic. 

\medskip

\noindent {\bf Acknowledgement.} This work has been partly motivated by a question raised by Markus Haase, after a talk of Vladimir 
M\"uller,  at  the 2019 
workshop of  the internet seminar on ergodic theorems organized  in Wuppertal. I would like to thank him here. I am also thankful to Guy Cohen, Tanja Eisner 
 and Michael Lin for  a careful reading of a preliminary version of the paper. Finally, I would like to thank the anonymous referee for his/her 
 extremely careful reading and for all his/her suggestions that improved the presentaion of the paper.

\end{document}